\newtheorem{thm}{Theorem}[section]
\newtheorem{lem}[thm]{Lemma}
\newtheorem{cor}[thm]{Corollary}
\newtheorem{pro}[thm]{Proposition}
\newtheorem{eg}[thm]{Example}
\newtheorem{defn}[thm]{Definition}
\newtheorem{notation}[thm]{Notation}
\newtheorem{remark}[thm]{Remark}
\newtheorem{problem}[thm]{Problem}
\newcommand{\e}{\mathsf e}
\newcommand{\f}{\mathsf f}
\newcommand{\h}{\mathsf h}
\newcommand{\g}{\mathsf g}
\newcommand{\Wsub}[1]{\mathbf W\kern -2pt_{#1}}
\newcommand{\Wsubit}[1]{W\kern -2pt_{#1}}
\newcommand{\up}[1]{\textup{#1}}
\newcommand{\mc}{\mathcal}
\newcommand{\bea}{\begin{eqnarray*}}
\newcommand{\eea}{\end{eqnarray*}}
\newcommand{\ben}{\begin{enumerate}}
\newcommand{\een}{\end{enumerate}}
\newcommand{\bi}{\begin{itemize}}
\newcommand{\ei}{\end{itemize}}
\newcommand{\Pa}{{\mathcal P}}
\newcommand{\id}{\mathsf{id}}
\newcommand{\dom}{\operatorname{dom}}
\begin{document}
\title[Monoids with tests]{Monoids with tests and the algebra of possibly non-halting programs.}
\author{Marcel Jackson and Tim Stokes}

\address{Department of Mathematics and Statistics, La Trobe University, Victoria, Australia}
\email{m.g.jackson@latrobe.edu.au}
\address{Department of Mathematics, University of Waikato, New Zealand}
\email{stokes@math.waikato.ac.nz}


\keywords{computable partial functions, algebraic models of computation, deterministic programs, domain, restriction semigroup, if-then-else}

 \thanks{The first author was supported by ARC Future Fellowship FT120100666 and ARC Discovery Project DP1094578.}
\begin{abstract}
We study the algebraic theory of computable  functions, which can be viewed as arising from possibly
non-halting computer programs or algorithms, acting on some state space, equipped with operations of composition, {\em if-then-else} and {\em while-do} defined in terms of a Boolean algebra of conditions.
It has previously been shown that there is no finite axiomatisation of
algebras of partial functions under these operations alone, and this
holds even if one restricts attention to transformations (representing halting programs) rather than partial functions, and omits {\em while-do} from the signature.  In the halting case, there is a
natural ``fix", which is to allow composition of halting programs with conditions, and then the resulting algebras admit a finite axiomatisation.  In the current setting such compositions are not possible, but by extending the notion of {\em if-then-else}, we are able to give finite axiomatisations of the resulting algebras of (partial) functions, with {\em while-do} in the signature if the state space is assumed finite.  The axiomatisations are extended to consider the partial predicate of equality.  All algebras considered turn out to be enrichments of the notion of a (one-sided) restriction semigroup.

\end{abstract}

\maketitle

\section{Motivation and definitions}

\subsection{Some terminology}
Let $X,Y$ be sets.  A {\em function $X\rightarrow Y$} is a partial map from a subset of $X$ into $Y$, and the set of all such is
denoted $\Pa(X,Y)$.  If $Y=X$ this is denoted $\Pa(X)$, a semigroup under composition (read left to right, so that $(fg)(x)=g(f(x))$ for all $f,g\in \Pa(X)$ and $x\in X$), and an element of $\Pa(X)$ is called a {\em function on $X$}.  Because $X$ is usually some fixed ``global domain'', we use the name \emph{domain} of $f$ (written $\dom(f)$) to denote the subset of points at which $f$ is actually defined.  The {\em identity map} $1_X$ on $X$ is the total function on $X$ that fixes every $x\in X$, and the {\em null map} $0_X$ is the function on $X$ with empty domain; they are respectively identity and zero elements in
the semigroup $\Pa(X)$.  The subscript $X$ will be omitted where the
choice is clear.  A {\em transformation on $X$} is an everywhere-defined function in $\Pa(X)$ (that is, having domain all of $X$); the set of all such is ${\mathcal T}(X)$, a submonoid of $\Pa(X)$.  Transformations are also known as total functions.

A {\em predicate} on $X$ is an everywhere-defined function $X\rightarrow \{T,F\}$; the set of all such is $2^X$, a Boolean algebra under the
usual logical connectives.  Denote by $I(X)$ the monoid of
restrictions of the identity function under composition; it is isomorphic to the semilattice $(2^X,\cap)$.

\subsection{Computable functions}
Many authors have investigated algebraic foundations for facets of the theory of computer programs: amongst others we have in mind are 
\emph{sum-ordered (partial) semirings} (Manes and Benson \cite{manesben}), 
\emph{dynamic algebras} (Pratt and others, see \cite{pra}), \emph{Kleene algebras with tests} (KAT) (Kozen \cite{kozenhoare}),  \emph{Kleene algebra with domain} (KAD; Desharnais, M\"oller and Struth~\cite{KAD}; see also Hirsch and Mikul\'as \cite{hirmik} and Desharnais, Jipsen and Struth~\cite{DJS}),  \emph{modal semirings} (M\"oller and Struth \cite{M&S}), 
\emph{refinement algebras} (von Wright \cite{vonW}),
\emph{correctness algebras} (Guttmann \cite{guttmann}),
as well as the authors' own contributions such as \emph{modal restriction semigroups} \cite{modrest}.   Approaches based on the full Tarski algebra of relations are detailed in Maddux \cite{mad}. 

These algebraic approaches have substantial connections with classical program logics, such as Hoare logic and various propositional dynamic logics, enabling straightforward algebraic equational reasoning to supplant the conventional logical approach.  Indeed, with the exception of KAT (which does not allow for domain information), the algebraic systems mentioned are designed to interpret at least the modal logic part of dynamic logic.   However the family of computable (that is, partial recursive) functions on a set $X$ is not typically a model of any of these algebraic systems.

First, in all but the case of modal restriction semigroups, these algebraic systems allow for union and usually reflexive transitive closure, reflecting the expressiveness of the associated logical systems.  This effectively forces a relational semantics rather than a functional one.  However, if one sticks to the basic {\em if-then-else} and {\em while-do} constructs, there is no need to use a relational semantics: partial functions suffice.

Additionally, many of these systems admit a notion of domain complementation, which is not compatible with the computability assumption.  More specifically, in dynamic logic, the proposition $[f]\text{false}$ (``necessarily false'' as determined by $f$ as a modal relation) holds exactly on the points at which the program $f$ fails to halt, and in general this is clearly not computable.  Moreover, in the algebraic formulations mentioned (as well as in dynamic logic itself), these propositions may themselves then become test conditions within other programs.  Thus constructions such as ``while program $f$ does not halt do program $g$'' can be expressed, and indeed give rise to their own nonhalting proposition and so on.

Test conditions arising in actual programs are typically Boolean combinations of basic tests, and certainly not statements on halting conditions.
The goal of the present article is to present algebraic systems that are rich enough to express standard deterministic programming connectives, such as {\em if-then-else}, as well as tests for equality and non-equality of variable values, yet does not permit the leaching of statements on halting into the test type.
The set of all partial recursive functions on $\mathbb{N}$ will provide a model,  
in contrast to the more common approach in which binary relations model programs.  
(There are of course other approaches to program algebra, such as predicate transformer semantics, but these work at a lower level in which assignments are modelled for example.)

The main results consist of finite axiomatisations for these systems, which we are able to show are complete (for the full first order theory, not just the equational fragment) with respect to suitable functional semantics.  Operations modelling looping are also considered, but for these we are unable to obtain finite axiomatizations, instead making do with axioms at least guaranteeing that certain desirable properties are satisfied,
including completeness for finite (and even periodic) algebras.

The basic approach shares features of both the KAT approach of \cite{kozenhoare} and the modal restriction semigroups with preferential join approach of \cite{modrest}.  Our algebraic systems consist of a semigroup of functions with an embedded sort $B$ of ``tests'', which will form a Boolean algebra in which the meet operation is just the underlying multiplication of the semigroup (so that the tests will form a subsemilattice: an idempotent and commutative subsemigroup), and with a Boolean complementation operation (which is not defined outside of the test sort).  

So far this is identical to the union- and star-free fragment of the algebraic systems considered in KAT.  However, we also consider unary operations modelling domain (as in KAD, or modal restriction semigroups and its predecessors \cite{csg}) as well as various equality test, {\em if-then-else} and {\em while-do} constructions.  All elements of the test sort $B$ are fixed by domain, but in general domain elements form a strictly larger subsemilattice than $B$.  
This reflects the fact that for us, tests are to be viewed as conditions in programs (and indeed as special types of programs themselves) rather than as assertions: we have no need to generate weakest preconditions for example, hence no need to view general domain elements as tests, and indeed no need for domain complement (antidomain) at all.

Another significant difference with KAT and KAD is that we use partial functions as our semantics of programs rather than binary relations.  This relates to the fact that we seek to model programs themselves rather than assertions about programs: we have no need of union or Kleene closure, which feature in dynamic logic for example, so we do not need binary relations either.  (Binary relations are of course closed under both operations, while partial functions are closed under neither.)

The computable functions on a set $X$ will form a model of each of the systems we consider, with $B$ modelling a Boolean algebra of identity maps with \emph{recursive} domains, and with general domain elements corresponding to the identity map on recursively enumerable subsets of $X$ (which are exactly the domains of computable functions).  

For the remainder of this section we further motivate and then carefully define the additional operations used to model {\em if-then-else} and other constructions.

\subsection{Extending {\em if-then-else}}

Operations modelling the {\em if-then-else} command of imperative programming languages have been modelled algebraically by a number of authors.
The idea is to model programs as functions $X\rightarrow Y$, (or even binary relations in $X\times Y$, though we do not consider these here), and then to define, for any $f,g\in \Pa(X,Y)$, and any predicate $\alpha\in 2^X$, 
\begin{align}
&\bullet \quad (\mbox{if }\alpha\mbox{ then }f\mbox{ else }g)(x)=\alpha[f,g](x):=\begin{cases} f(x)&\mbox{ if }x\mbox{ satisfies } \alpha,\\
g(x)&\mbox{ otherwise,} \label{ite}
\end{cases}
\end{align}
for all $x\in X$.  Then $\alpha[f,g]\in {\mathcal P}(X,Y)$ equals $f(x)$ when $\alpha(x)$ is true, and is $g(x)$ otherwise.  Of course this operation is motivated by the {\em if-then-else} connective of computer programs.

The case in which $Y=X$ is the one of chief interest to us here, although the more general setting saw much early work; see McCarthy \cite{mc}, Bergman \cite{berg}, Manes \cite{manes}, as well as the related work of Bloom and Tindell \cite{blotin}, Meklar and Nelson \cite{meknel}, and Guessarian and Meseguer \cite{guemes}.

Approaches based on a relational semantics for programs, but in which $Y=X$ so that composition is available as an operation, often make use of {\em test semirings}; see \cite{kozenhoare}, which may be viewed as ``Kleene algebras with tests" that lack the Kleene closure operation.  They model composition and union of programs (themselves modelled as binary relations on some space), each equipped with a sub-Boolean algebra of ``tests" (which are viewed as programs induced by Boolean tests that fix every element of their domains).  Then for programs $f,g$ and a test $\alpha$, it is possible to write
\begin{align}
&\bullet \quad \alpha[f,g]=\alpha f\cup \alpha'g,&
\end{align}
where $\alpha'$ is the ``complement" of the test $\alpha$.  

In \cite{ITEIJAC}, a functional semantics for halting programs was considered, where structures $(S,B)$ of the following form featured: $S$ is a semigroup of functions on some set $X$ (a subsemigroup of ${\mathcal P}(X)$), $B$ is some Boolean algebra of predicates on $X$ (a subalgebra of $2^X$), and $S$ is also closed under the
{\em if-then-else} operations associated with the elements of $B$.  Based on an idea developed for \cite{jacsto:amon}, it was shown that the class of such two-sorted algebras was not finitely axiomatizable, but that adding the following two-sorted operation to the signature gives finitely axiomatized structures in the case of transformations (total functions):
$\cdot: S\times B\rightarrow B$ such that $s\cdot\alpha$ is the functional composite
of $s,\alpha$. This mixed operation is well-motivated since it gives rise to a ``computable condition": is $\alpha$ true after $s$ is executed?   This composite $s\cdot \alpha$ is analogous to the ``Piercean operator'' of Boolean modules, in the sense of Brink~\cite{bri}.  A complete axiomatization in terms of finitely many equations is given in \cite{ITEIJAC}.  Also axiomatized is the predicate of equality of transformations.

The signature used in \cite{ITEIJAC} to yield a finite axiomatization for transformations cannot be used for partial functions, because the function-predicate composition operation $\cdot: S\times B\rightarrow B$ is not defined: $s\cdot\alpha$ will not be a predicate.  However, it will be a partial or ``possibly non-halting" predicate.  As mentioned, such ``non-halting tests" are considered by Manes in \cite{manes}.  This suggests the possibility of generalising the algebra of tests to admit the possibility that they do not halt, as in \cite{manes}, where the Boolean algebra is replaced by a ``C-algebra" in which a third alternative ``does not halt" is added to ``true" and ``false".  

However, we show here that in a setting in which composition is present, it is not necessary to introduce non-halting tests as separate entities: instead we retain only Boolean conditions, modelling elementary tests, and we generalise the {\em if-then-else} operations themselves, by defining the mixed quaternary operation $S\times B\times S\times S\rightarrow S$ of {\em extended if-then-else}, given by:
\begin{align}
&\bullet \quad (f,\alpha)[g,h](x):=\begin{cases} g(x)&\mbox{ if }f(x)\mbox{ satisfies } \alpha,\\
h(x)&\mbox{ if $f(x)$ satisfies $\alpha'$,}
\end{cases}&
\end{align}
for all $f,g,h\in S$ and $\alpha\in B$.  Note that if $x$ is outside of the domain of $f$ (that is, $f$ does not halt when executed at $x$), then the test $(f,\alpha)[g,h]$ is also undefined.  For transformations (where $f$ is defined everywhere), this yields $(f\cdot\alpha)[g,h]$ as in~\cite{ITEIJAC}.  In the present article we will obtain a finite axiomatization of the resulting algebras, which properly generalises the axiomatizations provided in \cite{ITEIJAC}.  Our approach is based on first axiomatizing restriction semigroups equipped with a sub-Boolean algebra of ``tests elements".

Also considered in \cite{ITEIJAC} is a predicate-valued operation of transformation equality, $*:S\times S\rightarrow B$, with $(f*g)(x)$ {\em true} if and only if $f(x)=g(x)$ and {\em false} otherwise.  Again, for partial functions the operation will not give a
predicate in general since it is undefined at $x\in X$ if either function is undefined at $x$.  In the present article, we are still able to axiomatize the quaternary operation $S^4\rightarrow S$, which we call {\em weak comparison}, given by
\begin{align}
&\bullet \quad(f=g)[h,k](x):=\begin{cases} h(x)&\mbox{ if }f(x)=g(x),\\
k(x)&\mbox{ if }f(x)\neq g(x).
\end{cases}&
\end{align}
Note that for $f(x)\neq g(x)$ to be true, we require that both $f$ and $g$ are defined at $x$, but return different values: if one or both of $f$ and $g$ are undefined at $x$, then so is $(f=g)[h,k]$.  For transformations (where $f$ and $g$ are always defined), this coincides with $(f*g)[h,k]$ considered in~\cite{ITEIJAC}. 
We axiomatize weak comparison in monoids of functions with zero, both in the presence of the extended {\em if-then-else} operations just defined but also on its own, along the way axiomatizing some less rich (hence more general) structures.

From this point, it is possible to define ``non-halting conditions" in terms of the new quaternary operations, and then to define logical connectives on them, giving a C-algebra as in \cite{manes} into which the given Boolean algebra $B$ embeds.  These induced connectives are natural in the setting of actual programming languages, as is discussed at length in \cite{manes}.

\subsection{Looping, {\em skip}, {\em abort} and tests}

To model looping using halting conditions, given $f\in \Pa(X)$ and $\alpha\in 2^X$, we define $(\alpha:f)=(${\em while }$\alpha$ {\em do }$f)$ to be, for any $x\in X$,
\begin{align}
&\bullet \quad (\alpha:f)(x)
:=\begin{cases} f^n(x)&\parbox{7cm}{ if $f^m(x)$ satisfies $\alpha$ for all $0\leq m< n$ but $f^{n}(x)$ does not,}\\
\mbox{undefined}&\mbox{ if }f^n(x)\mbox{ satisfies } \alpha\mbox{ for all }n\geq 0,
\end{cases}&
\end{align}
where we deine $f^0(x)=x$ for all $x\in X$.
So $(\alpha:f)$ acts by repeatedly iterating $f$ until the result no longer satisfies $\alpha$; if it always does, the loop
does not halt and there is no output.

We do not consider {\em while-do} in detail here, although  it is the natural source of non-halting.  Moreover, the functions $1$ (the identity) and $0$ (the empty function) are easily motivated in terms of the programs {\em skip} and {\em abort} respectively, which for example arise as
({\em while}  {\bf false} {\em do} $P$) for any program $P$, and ({\em while}  {\bf true} {\em do} {\em skip}), respectively. 

The presence of {\em if-then-else}, {\em skip} and {\em abort} now forces a copy of the conditions in $B$ into the program type $P$, via the correspondences:
$$\alpha\leftrightarrow \alpha[1,0],\ \
\alpha'\leftrightarrow \alpha[0,1].$$
In this way, each condition $\alpha\in B$ is manifest as a function which is a restriction of the identity $1_X$ to the truth
set of $\alpha$, which we call a {\em test}.  It is easy to see that conjunction of conditions arises as composition of their
corresponding tests, and it only remains to view complementation as an operation on restrictions of the identity, an operation
we call {\em test complement}.  

This embedding
is not possible in the case of halting programs considered in~\cite{ITEIJAC}, where only everywhere-defined transformations
are considered, although it is standard in the test semiring approaches of Kozen and others.  These approaches involve use of the ``Kleene closure" or ``asterate" operation (modelling reflexive transitive closure of binary relations) to model iteration. Objects called `Kleene algebras with tests" are used---these are Kleene algebras which are simultaneously test semirings; see \cite{kozenhoare} for example. In these, one may define 
$$(\alpha:f)=(\alpha f)^*\alpha'.$$   
When applied to the Kleene algebra with tests of binary relations on a set, this formula agrees with ours if $f$ is a function.

\subsection{Extended {\em while-do}}

We may extend {\em while-do} in the same way we have extended {\em if-then-else}.  By direct analogy, we define {\em extended while-do} on ${\mathcal P}(X)$ as follows:
given $f,g\in \Pa(X)$ and $\alpha\in 2^X$, we define, for any $x\in X$,
\begin{align}
&\bullet \quad((f,\alpha):g)(x):=\begin{cases} g^n(x)&\parbox{6cm}{if $f(g^m(x))$ satisfies $\alpha$ for all $0\leq m<n$ but $f(g^{n}(x))$ does not,}\\
\mbox{undefined}&\mbox{ if }f(g^n(x))\mbox{ satisfies } \alpha\mbox{ for all }n\geq 0.
\end{cases}
\end{align}
This says ``keep applying $g$ as long as the result satisfies $\alpha$ when $f$ is applied to it".  So if $f=1$, we recover $(\alpha:g)$.  
Note that $(1,\alpha)[b,c]$ equals the usual {\em if-then-else} operation $\alpha[b,c]$ as defined previously.
It would be possible to define a form of extended {\em while-do} that makes reference to the equality predicate, but we do not pursue this here.

\subsection{Domain}

In $\Pa(X)$, the derived unary operation $D:S\rightarrow S$ is a very natural one.  For partial functions in ${\mathcal P}(X)$, its formal definition is as follows:
\begin{align}
&\bullet \quad D(f):=\{(x,x)\mid (x,y)\in f\mbox{ for some }y\in X\},&
\end{align}
the restriction of the identity function to the domain of $f$.
In extended {\em if-then-else} algebras, it is given by $D(f)=(f,1)[1,1]$.  Conversely, for functions we have that
\begin{align}
&\bullet \quad (f,\alpha)[g,h]=D(f\alpha)g\cup D(f\alpha')h.&
\end{align}
Note also that $D(f)=(f=f)[1,0]$, so domain is again a derived operation in the presence of weak comparison.  Domain is also natural in the setting of the dynamic algebras of \cite{pra:ind}, and the domain semirings and Kleene algebras with domain as in~\cite{KAD}, and the modal semirings of \cite{M&S}.  In most of this work, which has a relational semantics for programs, domain complement is an allowable construct (although this is not the case in \cite{DJS} for example, where one has a distributive lattice of tests). Our approach will be to first axiomatize domain (but not domain complement) in the presence of tests, then to add axioms for extended {\em if-then-else} and weak comparison.

The domain operation $D$ has been considered by many authors in the setting of both partial transformations and binary relations.  In the functional case, the associated class of unary semigroups has a finite axiomatization (given below), and is now often called the class of {\em \up(left\up) restriction semigroups}.  The $D$ operation was considered by Trokhimenko \cite{trok}, who axiomatized a multiplace version in the functional case; the same characterization (at least for single place functions) can be obtained by adapting the earlier work of Schweizer and Sklar~\cite{SS}, and has been rediscovered in various guises by subsequent
authors, including the present authors~\cite{csg} (where they arise as \emph{twisted left closure semigroups}) and
Manes~\cite{manes} (where they arise as \emph{guarded semigroups}).  Restriction semigroups are closely related to weakly right ample
semigroups (see~\cite{gomgou99} and Fountain~\cite{fou} for example).
See also~\cite{coclac}, where a category theoretic version is considered, and~\cite{gouhol2}, where Gould and Hollings
present a variant of the ESN theorem of inverse semigroup theory \cite{lawson} applying to left restriction semigroups.  (These last two sources use the ``restriction" epithet we use here, which is becoming standard in the literature.)

Restriction semigroups in which there is a notion of complementation of domain elements are considered in \cite{modrest}.  As already discussed, domain complement is not in general a computable function and so is not in the signature of the algebras considered here.  However, the main results in \cite{modrest} may be considered consequences of results in the current article, applying to the case in which the set of test elements coincides with the image of $D$ (every domain element is a test), a point to which we return below.

The operations of composition and domain for binary relations on a set are considered by M\"{o}ller and Struth in \cite{M&S},
in the setting of {\em modal semirings}, where an operation modelling relational union is also present.  Such algebras are test semirings and so permit expression of {\em if-then-else}.  Kleene algebras with tests also possess a Kleene (reflexive transitive) closure unary operation (for modelling program iteration).  These systems have finite axiomatisations that are complete with respect to equational properties (holding in the relational semantics), but not with respect to wider properties.  With reflexive transitive closure, an incompleteness theorem is known: the quasi-equational theory of relational models is $\Pi_1^1$-complete (Hardin and Kozen~\cite{harkoz}) and so no recursive axiomatisation can exist.   Even without the reflexive transitive closure operation, these systems are  very unlikely to have finite complete axiomatisations (with respect to relational semantics);~see \cite{and1,and2,andmik}.

\section{Axiomatizations}

\subsection{Axioms for extended {\em if-then-else}}

As discussed, we might as well assume from the beginning that the unary operation $D$ is present in our signature, since it is expressible in terms of both extended {\em if-then-else} and weak comparison, and has in any case formed the basis of various algebraic approaches based on a relational semantics.  In fact, extended {\em if-then-else} may be completely specified in the presence of $D$ by the following laws:
\begin{align}
&\bullet \quad D(s\alpha)((s,\alpha)[t,u])=D(s\alpha)t &\label{eq:EITE2}\\
&\bullet \quad D(s\alpha')((s,\alpha)[t,u])=D(s\alpha')u &\label{eq:EITE3}\\
&\bullet\quad D((s,\alpha)[t,u])\leq D(s)&\label{eq:EITE5}
\end{align}
That is, if an algebra is isomorphic to an algebra in $\Pa(X)$ under composition and $D$ and with its Boolean algebra of tests correctly represented as well, and if has a quaternary operation satisfying the above, that quaternary operation must be extended {\em if-then-else}. 
To see this, first notice that these two laws hold for extended {\em if-then-else}.  The first asserts that if one restricts $(s,\alpha)[t,u]$ to values $x\in X$ for which $s(x)\in\alpha$, the result is the same as if $t$ is so restricted.  The second asserts the analogous fact for the case of restricting to $x$ for which $s(x)$ is not in $\alpha$.  The third asserts that the domain of $(s,\alpha)[t,u]$ is no bigger than that of $s$.  All of these laws certainly hold for extended {\em if-then-else}.  Conversely however, only one function satisfies these three conditions: if $v$ is a function satisfying $D(s\alpha)\cdot v=D(s\alpha)t$, $D(s\alpha')\cdot v=D(s\alpha')u$ and $D(v)\leq D(s)$, this guarantees that $v$ is nothing but $(s,\alpha)[t,u]$.  The reason is that the first two equations specify $v$ on $D(s)$ (to agree with either $t$ or $u$), and the third says that it is not defined elsewhere, and precisely one function satisfies these constraints.

So it only remains to correctly axiomatize restriction semigroups of functions having a distinguished Boolean algebra of tests $B$:-- addition of the above laws for extended {\em if-then-else} will then correctly axiomatize the richer structures.

\begin{notation}\label{notation}
We use the symbols $\e,\f,\g,\h$ \up(sometimes with numerical subscripts\up) to denote generic domain elements\up: elements of the form $D(x)$ for some $x$.  Generic elements from the test sort $B$ will typically be a special kind of domain element, and we use lower case Greek letters $\alpha,\beta,\delta$ \up(sometimes with subscripts\up) for these.
\end{notation}

Let $(S,B)$ be such that $S$ is a monoid with zero, having $B$ as a commutative, idempotent submonoid with zero that is equipped with a complementation
operation making it a Boolean algebra (with the semigroup multiplication treated as Boolean meet so that the bottom element is $0$ and the top element is $1$).   Then we say $(S,B)$ is a {\em monoid with tests}.   A {\em submonoid with tests} $(S_1,B_1)$
of the monoid with tests $(S,B)$ has $S_1$ as a submonoid with zero of $S$, and
$B_1$ as a sub-Boolean algebra of $B$; so $(S_1,B_1)$ is itself a monoid with tests.  Note that in this definition there is no requirement that elements of $S_1\backslash B_1$ must lie in $S\backslash B$.  Let $I(X)$ denote the subset of $\mathcal{P}(X)$ consisting of all restrictions of the identity map.  This is a Boolean algebra with respect to the meet operation of composition (which agrees with intersection) and an obvious Boolean complementation: taking $\alpha\in I(X)$ to the identity on the complement of the domain of $\alpha$.  Then $({\mathcal P}(X),I(X))$ is a monoid with
tests and hence so is any submonoid with tests; we call any such {\em functional}.  

\begin{remark}\label{rem:partial}
An \emph{embedding} of a monoid with tests $(S,B)$ into  $({\mathcal P}(X),I(X))$ will be a semigroup embedding $\phi$ of the semigroup $S$ into $\mathcal{P}(X)$ \up(with respect to composition of functions\up) such that the identity of $S$ is mapped by $\phi$ to the identity function of $\mathcal{P}(X)$, and such that $\phi(B)\subseteq I(X)$, with $\phi(\alpha')=\phi(\alpha)'$.  This is equivalent to $(S,B)$ being isomorphic to a submonoid with tests under our definition \up(namely, the monoid $\phi(S)$ with tests from the  Boolean subalgebra $\phi(B)$ of $I(X)$\up).
\end{remark}

Suppose that the monoid with tests $(S,B)$ is such that $S$ is equipped with a unary operation $D$ which satisfies, for all $s,t,u\in S$ and $\alpha,\beta\in B$:
\begin{align}
&\bullet\quad D(s)s=s\quad &\label{eq:D1}\\*
&\bullet \quad D(st)=D(s)D(st) &\label{eq:Dleft}\\
&\bullet\quad D(s)D(t)=D(t)D(s) &\label{eq:Dcom}\\
&\bullet\quad D(D(s))=D(s)\label{eq:DD}\\
&\bullet\quad sD(t)=D(st)s\qquad 
&\label{eq:Dtwisted}\\
&\bullet\quad D(\alpha)=\alpha&\label{eq:DT1}\\
&\bullet\quad
D(s\beta)t=D(s\beta)u \And D(s\beta')t=D(s\beta')u\ \Rightarrow\ D(s)t=D(s)u.&\label{eq:DT2}
\end{align}
Then we call $(S,B)$ a {\em restriction monoid with tests}.  
In terms of programs, these equations are mostly obviously satisfied: for example, the first says that first applying the program which acts like {\em skip} whenever $s$ halts and does not halt otherwise, followed by $s$, gives the same as applying $s$ itself: the two programs halt for the same inputs and give the same answers in such cases.  Law \eqref{eq:Dtwisted} is not obvious, but reflects a general property of partial functions not shared by relations.  The final one results from the fact that for a given (everywhere-defined) test $\beta$, the result of a computation that halts must satisfy either $\beta$ or its complement.

The abstract class of unary semigroups satisfying only (\ref{eq:D1}) to (\ref{eq:Dtwisted}) is the class of restriction semigroups.  Note that if $B=\{0,1\}$, then laws (\ref{eq:DT1}) and (\ref{eq:DT2}) are
trivially satisfied and so contribute nothing; so restriction monoids with tests are generalisations of restriction monoids with zero.

In \cite{csg}, other useful laws are shown to follow: for example $D(st)=D(sD(t))$, as well as $D(s)^2=D(s)$ and $D(D(s)D(t))=D(s)D(t)$,
these two implying that the subset $$D(S)=\{e\in S\mid D(e)=e\}=\{D(s)\mid s\in S\}$$ is a subsemigroup of $S$ which is a semilattice, the elements of which model restrictions of the identity function.  
We view $D(S)$ as a partially ordered set by defining $\e\leq \f$ if $\e=\e\f$, which allows viewing multiplication in $D(S)$ as meet.

We also view $S$ itself as partially ordered, under its {\em natural order} given by $s\leq t \Leftrightarrow s=D(s)t$.
Note that any function semigroup is {\em fundamentally ordered} in the sense of Schein \cite{schein}
(see page 38).  For functions, $f\leq g$ if and only if $f\subseteq g$ when viewed as graphs (sets of ordered pairs).
An important property of the fundamental order is that it is {\em stable}:
\begin{align}
&\bullet\quad s_1\leq t_1, s_2\leq t_2 \Rightarrow s_1s_2\leq t_1t_2.&\label{eq:stable}
\end{align}
The fundamental order is expressible in the language of restriction semigroups, via the natural order, and so (\ref{eq:stable}) will hold automatically.

Note that $({\mathcal P}(X),I(X))$ is a restriction monoid with
tests, and hence so is any submonoid with tests which is closed under $D$; we call any such {\em functional}.  The comments in Remark \ref{rem:partial} apply with obvious modification.

In the proofs to follow, we give many results in the ``test-free" restriction semigroup setting first, if it is possible to do so with no additional effort.  For $S$ a restriction semigroup, let $F\subseteq D(S)$ be a filter (closed under multiplication and such that $\e\in D(S)$ and $\e=\e\f$ implies $\f\in F$). 

\begin{lem}  \label{filtextend}
Let $S$ be a restriction semigroup, with $F$ a proper filter of $D(S)$ and $\h\in D(S)\backslash F$.  Then  
$$F_\h=\{\f\in D(S)\mid \f\geq \g\h, \g\in F\}$$
is a filter of $D(S)$ containing $\h$ and hence properly containing $F$.
\end{lem}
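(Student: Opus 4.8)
The plan is to verify the two defining closure properties of a filter for $F_\h$ and then to establish the three containment facts. The whole argument lives in the meet-semilattice $D(S)$, where the semigroup product \emph{is} the meet; I would use repeatedly that every element is idempotent, that the product is commutative, that $\g\h\le\g$ and $\g\h\le\h$ for all $\g,\h$ (a meet lies below each factor), and that the product is monotone: from $x\le y$, i.e.\ $x=xy$, one computes $(xz)(yz)=xyzz=xyz=xz$, so $xz\le yz$.

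For closure under multiplication, I would take $\f_1,\f_2\in F_\h$ with witnesses $\g_1,\g_2\in F$, so $\g_1\h\le\f_1$ and $\g_2\h\le\f_2$. Monotonicity gives $(\g_1\h)(\g_2\h)\le\f_1\f_2$, and commutativity together with idempotence of $\h$ rewrites the left side as $(\g_1\g_2)\h$. Since $F$ is closed under products, $\g_1\g_2\in F$, so $\g_1\g_2$ witnesses $\f_1\f_2\in F_\h$. For upward closure, suppose $\f\in F_\h$ with witness $\g$ and $\f\le\e$ in $D(S)$; transitivity of $\le$ yields $\g\h\le\f\le\e$, so the same $\g$ witnesses $\e\in F_\h$. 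This shows $F_\h$ is a filter.

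It remains to check the containments. Choosing any $\g\in F$ (a filter being nonempty) and using $\g\h\le\h$ shows $\h\in F_\h$. Given $\f\in F$, taking $\g=\f$ and using $\f\h\le\f$ shows $\f$ witnesses its own membership, so $F\subseteq F_\h$. Finally $\h\in F_\h$ whereas $\h\notin F$ by hypothesis, so this inclusion is strict; that is, $F_\h$ properly contains $F$.

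I do not expect a genuine obstacle: this is exactly the classical ``adjoin one element to a filter'' construction of lattice and Boolean-algebra theory, carried out in the semilattice $D(S)$. The only places demanding a little care are the monotonicity of the product, which drives the multiplicative-closure step, and the tacit use that a filter is nonempty, which is what places $\h$ inside $F_\h$.
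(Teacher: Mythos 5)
Your proof is correct and follows essentially the same route as the paper's: the paper likewise checks closure under products via $\f_1\f_2\geq (\g_1\g_2)\h$, checks upward closure by transitivity, and notes that $F_\h$ contains $\h$ and $F$. You merely make explicit two points the paper leaves tacit (monotonicity of the product in the semilattice $D(S)$ and nonemptiness of $F$), which is fine.
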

\begin{proof}
Clearly $F_\h$ contains both $\h$ and $F$.  If $f_1\geq g_1h, f_2\geq g_2h$ where $g_1,g_2\in F$, then $f_1f_2\geq gh$ where $g=g_1g_2\in F$, while if $f\geq f_1$ then $f\geq g_1h$ also.  So $F_\h$ is a filter.
\end{proof}

Let $S$ be a restriction semigroup.  Suppose $a,b\in S$ are such that $a\not\leq b$.  We say the filter $F$ of $D(S)$ is {\em $(a,b)$-separating} if (i) $D(a)\in F$, and (ii) there is no $\e\in F$ for which $\e a=\e b$.  

The filter of $D(S)$ generated by $D(a)$ (consisting of all $\e\in D(S)$ for which $D(a)\leq \e$) of course contains $D(a)$.  Suppose it contains $\e\in D(S)$ for which $\e a=\e b$.  Then $a=D(a)a=D(a)\e a=D(a)\e b=D(a)b$, and so $a\leq b$, a contradiction.  So no such $\e\in D(S)$ exists.  So the ordered set of $(a,b)$-separating filters in $D(S)$ is non-empty.

For $a,b\in S$ with $a\not\leq b$, the filter $F$ is {\em maximally $(a,b)$-separating} if it is maximal amongst all $(a,b)$-separating filters in $D(S)$.  Any chain of $(a,b)$-separating filters in $D(S)$ is contained in its union, itself an $(a,b)$-separating filter, and Zorn's Lemma then gives us the following.

\begin{lem} \label{maxsep}
Let $S$ be a a restriction semigroup, with $a,b\in S$ satisfying $a\not\leq b$.  Then there is a maximally $(a,b)$-separating filter in $D(S)$. 
\end{lem}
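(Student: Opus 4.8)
The plan is to apply Zorn's Lemma to the poset $\mathcal{F}$ of all $(a,b)$-separating filters of $D(S)$, ordered by inclusion. Since $a \not\leq b$, the paragraph preceding the lemma already exhibits one member of $\mathcal{F}$, namely the principal filter generated by $D(a)$, so $\mathcal{F}$ is non-empty. A maximal element of $\mathcal{F}$ is, by definition, a maximally $(a,b)$-separating filter; hence it suffices to show that every chain in $\mathcal{F}$ admits an upper bound lying in $\mathcal{F}$.

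Given a non-empty chain $\{F_i\}_{i \in I}$ in $\mathcal{F}$, I would take the candidate upper bound to be the union $F = \bigcup_{i \in I} F_i$ and verify that $F$ again belongs to $\mathcal{F}$. First, $F$ is a filter of $D(S)$: it is upward closed because each $F_i$ is, and it is closed under multiplication because if $\e \in F_i$ and $\f \in F_j$ then, the chain being totally ordered, one of $F_i, F_j$ contains the other, say $F_i \subseteq F_j$, whence $\e, \f \in F_j$ and so $\e\f \in F_j \subseteq F$. Second, $F$ is $(a,b)$-separating: it contains $D(a)$ because every $F_i$ does, and no $\e \in F$ can satisfy $\e a = \e b$, since such an $\e$ would lie in some $F_i$, contradicting that $F_i$ is $(a,b)$-separating. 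The empty chain needs no separate treatment, since any fixed member of the non-empty set $\mathcal{F}$ serves as its upper bound.

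With the chain condition verified, Zorn's Lemma furnishes a maximal element of $\mathcal{F}$, which is the required maximally $(a,b)$-separating filter. The argument is routine, and the only step demanding any attention is closure of $F$ under multiplication: here one genuinely uses that $\{F_i\}$ is a chain, since the union of two incomparable filters need not be closed under the underlying semilattice multiplication.
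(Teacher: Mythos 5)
Your proposal is correct and follows exactly the paper's own argument: non-emptiness is witnessed by the principal filter generated by $D(a)$, and Zorn's Lemma applies because the union of a chain of $(a,b)$-separating filters is again such a filter. You have merely spelled out the routine verifications that the paper leaves implicit.
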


We use a ``determinative pairs'' approach; the reader is referred to \cite{BStrans} for a detailed introduction to this representation technique, though the current presentation is self contained aside from very routine details.  The determinative pairs technique requires the construction of a right congruence $\epsilon$ of $S$ (that is, an equivalence relation on $S$ that is stable under multiplication on the right) having a block that is a right ideal (that is, is fixed under multiplication on the right).  Our construction of $\epsilon$ will be determined by the restriction semigroup structure.  We use the same definition throughout.

\begin{defn}\label{defn:detpair} Let $S$ be a restriction semigroup, and $F$ a proper filter  of $D(S)$.  Define
$W_F=\{a\in S\mid D(a)\not\in F\}$, a right ideal of $S$.
Define a binary relation $\epsilon_F$ on $S$ by setting
$$a\mathrel{\epsilon_F} b \Leftrightarrow \e a=\e b\mbox{ for some }\e\in F.$$
\end{defn}
It is routine to verify that $\epsilon_F$ is a right congruence on $S$: for all $a,b,c\in S$, $a \mathrel{\epsilon_F} b$ implies $ac\mathrel{\epsilon_F} bc$.  Moreover, $S\backslash W_F$ is a union of $\epsilon_F$-classes since if $x\not\in W_F$ and $(x,y)\in \epsilon_F$, then $\e x=\e y$ for some $\e\in F$, so $D(y)\geq \e D(y)=D(\e y)=D(\e x)=\e D(x)\in F$, so $y\not\in W_F$ also.
Hence $\epsilon_F$ together with $W_F$ constitute a determinative pair in the sense of Schein \cite{BStrans}.  

Let $S_F=(S\backslash \{W_F\})/\epsilon_F$. Now for any $s\in S$, define $\psi^F_s\in {\mathcal P}(S_F)$ by setting
$$\psi^F_s(\bar{x})=\overline{xs}\mbox{ for all }\bar{x}\in S_F\mbox{ for which }xs\not\in W_F,$$ and undefined otherwise, where we are writing $\bar{x}$ for the $\epsilon_F$-class containing $x\in S\backslash W_F$.  The general theory of determinative pairs implies that $\psi^F_s$ is well-defined, and that the mapping $$\theta_F:S\rightarrow {\mathcal P}(S_F),
\mbox{ given by }\theta_F(s)=\psi^F_s\mbox{ for all }s\in S$$ is a semigroup homomorphism mapping any identity element to the identity function and any zero element to the empty function.

Thinking in terms of programs, two programs are equivalent ``modulo the filter $F$" essentially says that the programs act identically on some ``large"' subset of the state space (in a sense determined by the filter $F$) on which both are assumed to act.  A program is in $W_F$ if its domain is not such a large subset: it does not even halt on a large subset.

We now let $S_0$ be the union $\bigcup S_F$ over all maximally $(a,b)$-separating filters $F$ of $D(S)$, ranging over all $a,b$ for which $a\not\leq b$.  Define
$$\theta:S\rightarrow {\mathcal P}(S_0)\mbox{ to be} \bigcup\theta_F,$$ 
the union of the various $\theta_F$ as $F$ ranges over all maximally $(a,b)$-separating filters of $D(S)$. 
It follows that $\theta=\bigcup\theta_F$ is also a semigroup homomorphism since all compositions in ${\mathcal P}(\bigcup S_F)$ are calculated independently on each $S_F$.

The next result implies that every restriction semigroup is functional, which is well-known and which can be proved using a much simpler representation technique.  But in order to represent tests and other operations correctly, the approach used here proves necessary.

\begin{lem}\label{lem:restriction}
The mapping $\theta$ just defined is a restriction semigroup embedding.
\end{lem}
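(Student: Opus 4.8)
The plan is to verify three things: that $\theta$ is a semigroup homomorphism (already established in the construction), that $\theta$ preserves the domain operation $D$, and that $\theta$ is injective. Since $\theta=\bigcup\theta_F$ and both composition and $D$ are computed componentwise on the disjoint union $S_0=\bigcup S_F$, it suffices to treat each $\theta_F$ separately for the $D$-preservation claim, and then to separate any pair $a\not\leq b$ on a single well-chosen component for injectivity. As $\theta$ is a restriction semigroup homomorphism once $D$ is preserved, an order embedding (equivalently, injectivity) is all that remains for the embedding claim.

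For $D$-preservation I would fix a filter $F$ and a class $\bar x\in S_F$ and compare $\psi^F_{D(s)}$ with $D(\psi^F_s)$, the restriction of the identity on $S_F$ to the domain of $\psi^F_s$. The key algebraic input is the twisted law \eqref{eq:Dtwisted}, which gives $xD(s)=D(xs)x$. From this and the derived identities of restriction semigroups one computes $D(xD(s))=D(xs)$, so that $xD(s)\in W_F$ exactly when $xs\in W_F$; hence the two partial functions have the same domain. On that common domain one has $\psi^F_{D(s)}(\bar x)=\overline{xD(s)}=\overline{D(xs)x}$, and since $D(xs)\in F$ the witness $\e=D(xs)$ satisfies $\e D(xs)x=\e x$ by \eqref{eq:D1}, whence $\overline{D(xs)x}=\bar x$; thus $\psi^F_{D(s)}(\bar x)=\bar x=D(\psi^F_s)(\bar x)$, as required.

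For injectivity, since the natural order is antisymmetric it is enough to show that $a\not\leq b$ forces $\theta(a)\not\leq\theta(b)$, that is, $\theta(a)\not\subseteq\theta(b)$ as graphs. Given such a pair I would invoke Lemma~\ref{maxsep} to obtain a maximally $(a,b)$-separating filter $F$ (one of the filters indexing $S_0$) and evaluate $\theta_F(a),\theta_F(b)$ at the class $\overline{D(a)}$. Property (i) gives $D(a)\in F$, so $\overline{D(a)}\in S_F$ and, using \eqref{eq:D1}, $\psi^F_a(\overline{D(a)})=\bar a$. If $D(a)b\in W_F$ then $\psi^F_b$ is undefined at $\overline{D(a)}$ while $\psi^F_a$ is defined there, and we are done. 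Otherwise $\psi^F_b(\overline{D(a)})=\overline{D(a)b}$, and I would show $\bar a\neq\overline{D(a)b}$: if not, then $\e a=\e D(a)b$ for some $\e\in F$, and setting $\delta=\e D(a)\in F$ one gets $\delta a=\e a=\e D(a)b=\delta b$, contradicting property (ii). In either case $\theta_F(a)$ and $\theta_F(b)$ differ at $\overline{D(a)}$, so $\theta(a)\neq\theta(b)$.

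The main obstacle is this injectivity step: the whole point of the determinative-pairs construction with separating filters is to manufacture, for each failure $a\not\leq b$ of the order, a point of the representation space witnessing the difference, and the delicate part is the second case, showing the two images genuinely land in distinct $\epsilon_F$-classes. This is exactly where property (ii) of $(a,b)$-separating filters is used. The $D$-preservation step, while technical, is a routine consequence of the twisted law once the identity $D(xD(s))=D(xs)$ is extracted, and the homomorphism property is inherited componentwise from the general theory of determinative pairs.
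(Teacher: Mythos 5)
Your proposal is correct and follows essentially the same route as the paper's proof: $D$-preservation is established componentwise by showing $\psi^F_{D(s)}$ and $\psi^F_s$ have the same domain (via $D(xD(s))=D(xs)$) and that $\overline{xD(s)}=\bar x$ using the twisted law with witness $D(xs)\in F$, and injectivity is obtained by evaluating at $\overline{D(a)}$ in the component of a maximally $(a,b)$-separating filter, with the same two-case analysis (your split on $D(a)b\in W_F$ is equivalent to the paper's split on $D(b)\in F$). The only differences are cosmetic.
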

\begin{proof}  Fix a maximally $(a,b)$-separating filter $F$ and define $\theta_F$ as above.
Note that $\psi^F_{D(a)}(\bar{x})=\overline{xD(a)}$, which is defined if and only if
$D(xD(a))=D(xa) \in F$, which is the same as saying that $\psi^F_a(\bar{x})$ is defined.  But then since $D(xa)xD(a)=D(xD(a))xD(a)=xD(a)=D(xa)x$ with $D(xa)\in F$, it follows that $\overline{xD(a)}=\bar{x}$, and so $\psi^F_{D(a)}(\bar{x})=\overline{xD(a)}=\bar{x}$.  So $\psi^F_{D(a)}$ is indeed represented by $\theta_F$ as the restriction of the identity function to the domain of $\psi^F_a$:- $\theta_F(D(a))=D(\theta_F(a))$.  So $\theta_F$ is a restriction semigroup homomorphism.  
It follows that $\theta=\bigcup\theta_F$ is a restriction semigroup homomorphism since, like composition, $D$ on ${\mathcal P}(\bigcup S_F)$ is calculated independently on each $S_F$.  It remains to show that $\theta$ is injective.

Suppose $a\not\leq b$.  If $F$ is a maximally $(a,b)$-separating filter, then using that $F$ to define the $\psi^F_x$, we see that $\psi^F_a(\overline{D(a)})=\overline{D(a)a}=\bar{a}$ is defined since $D(a)\in F$.  If $D(b)\in F$, then also $\psi^F_b(\overline{D(a)})=\overline{D(a)b}$ is defined since $D(D(a)b)=D(a)D(b)\in F$, but $\bar{a}\neq \overline{D(a)b}$ since if it was, then there would be $\e\in F$ for which $\e D(a)a=\e a=\e D(a)b$, and $\e D(a)\in F$, contradicting the fact that there is no $\e\in F$ for which $\e a=\e b$.  If $D(b)\not\in F$, then $\psi^F_b(\overline{D(a)})=\overline{D(a)b}$ is not even defined.  In either case, it must be that $\psi^F_a\not\subseteq \psi^F_b$.  So $\theta(a)\not\subseteq \theta(b)$.
\end{proof}

\begin{thm}  \label{main}
Every restriction monoid with tests is isomorphic to a functional one.
\end{thm}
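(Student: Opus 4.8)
The plan is to reuse the embedding $\theta=\bigcup\theta_F$ constructed for Lemma~\ref{lem:restriction} wholesale, and to verify only the two extra things needed to make it an embedding of \emph{monoids with tests} in the sense of Remark~\ref{rem:partial}: that each test is sent into $I(S_0)$, and that Boolean complementation is preserved. First I would record what comes for free. By Lemma~\ref{lem:restriction}, $\theta$ is an injective restriction semigroup homomorphism, so it respects $D$ and sends the identity and zero of $S$ to the identity and empty functions of $\mathcal{P}(S_0)$. Since every test $\alpha\in B$ satisfies $D(\alpha)=\alpha$ by \eqref{eq:DT1} and $\theta$ commutes with $D$, we get $\theta(\alpha)=D(\theta(\alpha))\in I(S_0)$, so tests land among the restrictions of the identity. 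As $\theta$ preserves the multiplication (Boolean meet), sends $0,1$ to the bottom and top of $I(S_0)$, and is injective, it will only remain to prove the complementation identity $\theta(\alpha')=\theta(\alpha)'$; preservation of join then follows by De~Morgan, making $\theta|_B$ a Boolean embedding.

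The heart of the argument is this complementation identity, which I would check filter by filter. Fix a maximally $(a,b)$-separating filter $F$ and $\bar x\in S_F$, so $D(x)\in F$. Because $\alpha$ is a domain element, the twisted law \eqref{eq:Dtwisted} gives $x\alpha=D(x\alpha)x$, whence $\psi^F_\alpha(\bar x)=\overline{x\alpha}=\bar x$ whenever it is defined (which happens exactly when $D(x\alpha)\in F$), and similarly for $\alpha'$. Thus $\theta_F(\alpha)$ and $\theta_F(\alpha')$ both act as partial identities, and since $\psi^F_1$ is total on $S_F$ and complementation in $I(S_0)$ is computed coordinatewise over the $S_F$, everything reduces to showing that the domains of $\psi^F_\alpha$ and $\psi^F_{\alpha'}$ partition $S_F$; that is, that exactly one of $D(x\alpha)\in F$, $D(x\alpha')\in F$ holds. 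For ``at most one'' I would use $\alpha\alpha'=0$ together with the twisted law to compute $D(x\alpha)D(x\alpha')=0$, so that both lying in the proper filter $F$ would force $0\in F$, a contradiction.

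The remaining direction, ``at least one'', is the step I expect to be the main obstacle, since it is precisely where the test axiom \eqref{eq:DT2} and the maximality of the separating filter must be combined; the rest is inherited from Lemma~\ref{lem:restriction} or is routine restriction-semigroup calculation. Suppose toward a contradiction that $D(x)\in F$ but neither $D(x\alpha)$ nor $D(x\alpha')$ lies in $F$. I would extend $F$ by each of these using Lemma~\ref{filtextend}; by maximality neither $F_{D(x\alpha)}$ nor $F_{D(x\alpha')}$ is $(a,b)$-separating, and since $D(a)\in F$ still lies in each extension, condition~(i) persists, so condition~(ii) must fail in each. Unwinding the definition of $F_\h$ this yields $\g_1,\g_2\in F$ with $\g_1D(x\alpha)a=\g_1D(x\alpha)b$ and $\g_2D(x\alpha')a=\g_2D(x\alpha')b$. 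Setting $\g=\g_1\g_2\in F$ and $s=\g x$, and rewriting $\g D(x\alpha)=D(s\alpha)$ and $\g D(x\alpha')=D(s\alpha')$ via the identity $D(\e y)=\e D(y)$ for domain elements $\e$ (a consequence of \eqref{eq:Dtwisted}), I would obtain $D(s\alpha)a=D(s\alpha)b$ and $D(s\alpha')a=D(s\alpha')b$. Axiom \eqref{eq:DT2} then delivers $D(s)a=D(s)b$ with $D(s)=\g D(x)\in F$, contradicting the fact that $F$ is $(a,b)$-separating. This establishes the domain partition, hence $\theta(\alpha')=\theta(\alpha)'$, and completes the proof that $\theta$ embeds $(S,B)$ into $(\mathcal{P}(S_0),I(S_0))$ as a monoid with tests.
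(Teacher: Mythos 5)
Your proposal is correct and follows essentially the same route as the paper: reuse the embedding $\theta=\bigcup\theta_F$ from Lemma~\ref{lem:restriction}, note that meet and the placement of tests in $I(S_0)$ come for free since $B\subseteq D(S)$, rule out $D(x\alpha),D(x\alpha')$ both lying in $F$ via $\alpha\alpha'=0$, and derive the ``at least one'' direction by extending $F$ with Lemma~\ref{filtextend}, invoking maximality of the $(a,b)$-separating filter, and applying axiom~\eqref{eq:DT2} to reach a contradiction. The only differences are presentational (you spell out the De~Morgan step and the partial-identity behaviour of $\psi^F_\alpha$ slightly more explicitly); the substance matches the paper's proof.
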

\begin{proof}
Now assume $S$ is a restriction monoid with tests in the above construction.
It remains to show that $\theta$ correctly represents complementation on $B$ (noting that meet is already represented since $B\subseteq D(S)$).

Now for $\alpha\in B$, first note that $(\psi^F_{\alpha}\circ\psi^F_{\alpha'})(\bar{x})$ is undefined since $x\alpha'\alpha=0$
and $D(0)\not\in F$, so $\psi^F_{\alpha'}$ is the restriction of the identity function to at least some subset of the
complement of the domain of $\psi^F_{\alpha}$.

Conversely, suppose for contradiction that $\bar{x}\in S_F$ is in the domain of neither $\psi^F_{\alpha}$ nor $\psi^F_{\alpha'}$.  So $D(x)\in F$ but $D(x\alpha)\not\in F$ and
$D(x\alpha')\not\in F$.  For any $\h\in D(S)\backslash F$, $F_\h$ as in Lemma~\ref{filtextend} is a filter of $D(S)$ which properly contains $F$.  So there is $\f\in F_{\h}$ such that 
$\f a=\f b$ (since $F_\h$ obviously contains $D(a)$ and $F$ is already maximally $(a,b)$-separating).  So letting $\h=D(x\alpha), D(x\alpha')$ in turn, we see there are $\f_1\geq \g_1 D(x\alpha)$ and $\f_2\geq \g_2 D(x\alpha')$ (for some $\g_1,\g_2\in F$) such that $\f_1 a=\f_1 b$ and $\f_2 a=\f_2 b$.  Then, setting $\g=\g_1\g_2\in F$, we see that $D(\g x\alpha)a=\g D(x\alpha)a=\g D(x\alpha)b=D(\g x\alpha)b$ and similarly that $D(\g x\alpha')a=D(\g x\alpha')b$.  Then law \eqref{eq:DT2} gives $D(\g x)a=D(\g x)b$.  However, $D(\g x)=\g D(x)\in F$ because both $\g$ and $D(x)$ lie in $F$.  This contradicts the choice of $F$ as an $(a,b)$-separating filter.  So the union of the domains of $\psi^F_{\alpha}$ and $\psi^F_{\alpha'}$ is all of $S_F$, as required.
\end{proof}

An {\em extended if-then-else monoid} is a restriction monoid with tests $(S,B)$ having extended
{\em if-then-else} operation $S\times B\times S\times S\rightarrow S$ 
satisfying the laws (\ref{eq:EITE2}), (\ref{eq:EITE3}) and (\ref{eq:EITE5}) above.
$({\mathcal P}(X),I(X))$ is an example, as is any subalgebra of it; we call these {\em functional}.

\begin{cor}
Every extended {\em if-then-else} monoid is up to isomorphism functional.
\end{cor}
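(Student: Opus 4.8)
The plan is to build directly on Theorem~\ref{main}. Given an extended \emph{if-then-else} monoid $(S,B)$, I would first forget the quaternary operation and regard $(S,B)$ merely as a restriction monoid with tests. Theorem~\ref{main} then supplies an isomorphism $\theta$ from $(S,B)$ onto a functional restriction monoid with tests, which I may take to be a submonoid with tests of $(\Pa(X),I(X))$ closed under $D$, with $\theta(B)\subseteq I(X)$ and $\theta(\alpha')=\theta(\alpha)'$. All that remains is to check that this same $\theta$ transports the abstract extended \emph{if-then-else} of $S$ to the concrete operation on $\Pa(X)$; once this is done, the image of $S$ is closed under the concrete operation, $\theta$ is an isomorphism of extended \emph{if-then-else} monoids onto a functional one, and we are finished.

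For the key step, I would fix $s,t,u\in S$ and $\alpha\in B$ and write $v=\theta\big((s,\alpha)[t,u]\big)$. Because $\theta$ preserves the monoid multiplication, the operation $D$, and Boolean complementation on tests, and because the natural order is definable from these (via $x\leq y\Leftrightarrow x=D(x)y$) and hence preserved by $\theta$, I can apply $\theta$ to each of the defining laws \eqref{eq:EITE2}, \eqref{eq:EITE3} and \eqref{eq:EITE5}. Doing so shows that, inside $\Pa(X)$, the function $v$ satisfies $D(\theta(s)\theta(\alpha))v=D(\theta(s)\theta(\alpha))\theta(t)$, $D(\theta(s)\theta(\alpha)')v=D(\theta(s)\theta(\alpha)')\theta(u)$ and $D(v)\leq D(\theta(s))$, with $\theta(\alpha)\in I(X)$ a genuine test.

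Finally I would invoke the uniqueness observation already recorded in the discussion following \eqref{eq:EITE5}: in $\Pa(X)$ there is exactly one function meeting these three constraints, namely the concrete extended \emph{if-then-else} $(\theta(s),\theta(\alpha))[\theta(t),\theta(u)]$ (the first two equations pin down its values on $\dom(\theta(s))$, agreeing with $\theta(t)$ where $\theta(s)$ lands in $\theta(\alpha)$ and with $\theta(u)$ otherwise, while the third forces it to be undefined off $\dom(\theta(s))$). Hence $v=(\theta(s),\theta(\alpha))[\theta(t),\theta(u)]$, so $\theta$ respects extended \emph{if-then-else}. The only real content is this uniqueness, which has already been established before the statement; everything else is the bookkeeping of pushing three identities through an isomorphism, so I expect no genuine obstacle beyond confirming that $\theta(\alpha)$ really is a test in $I(X)$, which Theorem~\ref{main} guarantees.
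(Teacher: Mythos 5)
Your proposal is correct and follows essentially the same route as the paper: apply Theorem~\ref{main} to the restriction-monoid-with-tests reduct, then use the fact that laws \eqref{eq:EITE2}, \eqref{eq:EITE3} and \eqref{eq:EITE5} uniquely determine extended \emph{if-then-else} in the functional setting. You simply spell out in more detail the bookkeeping that the paper leaves implicit.
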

\begin{proof}
From Theorem \ref{main}, the restriction monoid with tests reduct of any extended {\em if-then-else} monoid is functional.  But the laws (\ref{eq:EITE2}), (\ref{eq:EITE3}) and (\ref{eq:EITE5}) completely specify extended {\em if-then-else} in functional restriction monoids with tests, as discussed earlier.
\end{proof}

It follows easily that the law $D(s)=(s,1)[1,1]$  holds in all extended {\em if-then-else} monoids (since this holds in all functional examples), and so $D$ can be entirely eliminated from the signature and hence the axioms for extended {\em if-then-else} monoids.
It is also worth noting that the quasi-equational axiom for restriction monoids with tests may be replaced by two equations.

\begin{pro}  \label{simpexit}
In an extended {\em if-then-else} monoid, axiom (\ref{eq:DT2}) is equivalent to the following equational laws:
\begin{align}
&\bullet\quad D(s)t=(s,\alpha)[t,t],\quad &\label{eq:EITE1}\\*
&\bullet\quad (s,\alpha)[t,u]=(s,\alpha)[D(s\alpha)t,D(s\alpha')u]. &\label{eq:EITE4}
\end{align}
\end{pro}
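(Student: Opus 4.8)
The plan is to prove the two directions of the equivalence separately, working throughout modulo the remaining axioms of a restriction monoid with tests together with the if-then-else laws \eqref{eq:EITE2}, \eqref{eq:EITE3} and \eqref{eq:EITE5}; crucially, these are all still available when \eqref{eq:DT2} is dropped. As a preliminary I would record the small fact that $D(w)\leq D(s)$ forces $D(s)w=w$: indeed $D(w)\leq D(s)$ means $D(w)=D(w)D(s)$, so $D(s)w=D(s)D(w)w=D(w)D(s)w=D(w)w=w$ using \eqref{eq:D1} and \eqref{eq:Dcom}. This identity is what lets \eqref{eq:EITE5} strip a leading domain factor.

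For the forward implication I would assume \eqref{eq:DT2} and derive \eqref{eq:EITE1} and \eqref{eq:EITE4} by hand. For \eqref{eq:EITE1}, set $w=(s,\alpha)[t,t]$; laws \eqref{eq:EITE2} and \eqref{eq:EITE3} give $D(s\alpha)w=D(s\alpha)t$ and $D(s\alpha')w=D(s\alpha')t$, so the hypotheses of \eqref{eq:DT2} (taken with $\beta=\alpha$ and $w$ in the role of $u$) hold, yielding $D(s)t=D(s)w$. Since \eqref{eq:EITE5} gives $D(w)\leq D(s)$, the preliminary identity gives $w=D(s)w=D(s)t$, which is \eqref{eq:EITE1}. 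The argument for \eqref{eq:EITE4} is the same in spirit: writing $v=(s,\alpha)[t,u]$ and $w=(s,\alpha)[D(s\alpha)t,D(s\alpha')u]$, laws \eqref{eq:EITE2} and \eqref{eq:EITE3} together with idempotency of domain elements give $D(s\alpha)v=D(s\alpha)t=D(s\alpha)w$ and $D(s\alpha')v=D(s\alpha')u=D(s\alpha')w$; then \eqref{eq:DT2} yields $D(s)v=D(s)w$, and \eqref{eq:EITE5} collapses the leading $D(s)$ on each side to give $v=w$.

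For the converse I would assume \eqref{eq:EITE1} and \eqref{eq:EITE4} and derive the quasi-equation \eqref{eq:DT2} directly, without appealing to the functional representation of Theorem~\ref{main} (which is unavailable here, as it presupposes \eqref{eq:DT2}). Suppose $D(s\beta)t=D(s\beta)u$ and $D(s\beta')t=D(s\beta')u$. Applying \eqref{eq:EITE4} with $\alpha=\beta$ twice gives $(s,\beta)[t,t]=(s,\beta)[D(s\beta)t,D(s\beta')t]$ and $(s,\beta)[u,u]=(s,\beta)[D(s\beta)u,D(s\beta')u]$; the two right-hand sides coincide by the hypotheses, so $(s,\beta)[t,t]=(s,\beta)[u,u]$. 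Rewriting both sides by \eqref{eq:EITE1} then gives $D(s)t=D(s)u$, exactly the conclusion of \eqref{eq:DT2}.

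All the computations are short, so there is no serious obstacle; the two points needing care are the correct instantiation of the implication \eqref{eq:DT2} (substituting an if-then-else term for one side of the hypotheses) and the repeated use of \eqref{eq:EITE5} via the preliminary identity $D(w)\leq D(s)\Rightarrow D(s)w=w$. I expect the converse to be the more conceptually significant direction, since it is the one that genuinely trades an implication for equations and so cannot lean on the representation theorem.
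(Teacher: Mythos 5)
Your proof is correct, and the direction from the equations \eqref{eq:EITE1}, \eqref{eq:EITE4} to the quasi-equation \eqref{eq:DT2} is essentially identical to the paper's explicit computation: apply \eqref{eq:EITE4} to rewrite $(s,\beta)[t,t]$ and $(s,\beta)[u,u]$, use the hypotheses to identify the results, and collapse via \eqref{eq:EITE1}. Where you genuinely diverge is in the other direction. The paper disposes of it in one line by invoking the representation theorem: an algebra satisfying \eqref{eq:DT2} together with the other axioms is an extended \emph{if-then-else} monoid, hence functional, and \eqref{eq:EITE1} and \eqref{eq:EITE4} hold in all functional models. You instead give a direct equational derivation: the observation that \eqref{eq:EITE2}, \eqref{eq:EITE3} pin down any candidate term on $D(s\alpha)$ and $D(s\alpha')$, that \eqref{eq:DT2} then equates two such candidates after multiplication by $D(s)$, and that \eqref{eq:EITE5} together with your preliminary identity $D(w)\leq D(s)\Rightarrow D(s)w=w$ (which follows correctly from \eqref{eq:D1} and \eqref{eq:Dcom}) strips the leading $D(s)$. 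This is exactly the uniqueness argument the paper sketches informally when motivating laws \eqref{eq:EITE2}--\eqref{eq:EITE5}, but carried out syntactically. Your route is more self-contained --- it does not lean on the (comparatively heavy) Theorem~\ref{main} and its filter machinery --- at the cost of a slightly longer computation; the paper's route is shorter but makes the proposition logically dependent on the representation result. Both are sound; the only small point to make explicit in your \eqref{eq:EITE4} derivation is the idempotency $D(x)^2=D(x)$ of domain elements, which the paper records as a consequence of the restriction semigroup axioms.
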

\begin{proof}
If $D(s\beta)t=D(s\beta)u$, then
\bea
D(s)t&=&(s,\beta)[t,t]\\
&=&(s,\beta)[D(s\beta)t,D(s\beta')t]\\
&=&(s,\beta)[D(s\beta)u,D(s\beta')u],
\eea
which by symmetry is $D(s)u$.  The converse follows from the fact that the above laws all hold in functional cases.
\end{proof}

Many further laws now follow easily, for example
$$s\cdot (v,\alpha)[t,u]=(sv,\alpha)[st,su],$$
the $v=1$ case of which is one of the defining laws for the B-semigroups considered in \cite{ITEIJAC}.

Without the introduction of extended \emph{if-then-else}, the law \eqref{eq:DT2} is inherently implicational: it cannot be replaced by one or more equational axioms.  We postpone the proof of this claim until the next section (see Example \ref{eg:quasiv}).

\subsection{Axioms for intersection}

If $s,t\in {\mc P}(X)$, their {\em intersection} $s\cap t$ is in ${\mc P}(X)$ too.  In terms of programs, if $s,t$ are computable, it is clear that so is $s\cap t$: for any input at which both $s,t$ halt, one can create a program that gives the common answer provided by both if they agree and does not halt otherwise.  In terms of weak comparison,  $s*t:=(s=t)[1,0]$ is the restriction of the identity to those $x\in X$ for which $s(x),t(x)$ are both defined and agree, so $s*t=D(s\cap t)$, and $s\cap t=(s*t)s$.  Moreover $D(s)=s*s$.  We first axiomatize $*$ (equivalently intersection) in functional monoids with tests.

Semigroups of functions equipped with both $D$ and $\cap$
were characterized independently in both
\cite{dudtro} and \cite{agree}.  In the latter case, the operation $*$ was used throughout; here are the laws in terms of it.
\begin{align}
&\bullet\quad (s*s)s=s\quad &\label{eq:A1}\\*
&\bullet \quad s*t= t*s &\label{eq:Acom}\\
&\bullet \quad (s*t)s=(s*t)t &\label{eq:Aeq}\\
&\bullet\quad (u*v)s*t=(s*t)(u*v) &\label{eq:Anorm}\\
&\bullet\quad u(s*t)= (us*ut)u&\label{eq:Atwisted}
\end{align}
Any semigroup with a binary operation $*$ satisfying these laws is called {\em twisted agreeable} in \cite{agree}.
It is straightforward to show directly (and in any case it follows from the axioms) that setting $D(s):=s*s$ makes a twisted agreeable semigroup into a restriction semigroup (in which $s*t=D(s*t)$).

The following is Theorem 5.1 of \cite{agree}, noting also that the ``normality'' condition considered there is easily seen to be satisfied in the twisted agreeable case.

\begin{lem}  \label{agreenor}  
In a twisted agreeable semigroup $S$, for all $x,y\in S$, $x*y$ is the largest $\e\in D(S)$ such that $\e\leq D(x)D(y)$ and $\e x=\e y$.
\end{lem}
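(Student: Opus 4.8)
The plan is to show that $x*y$ belongs to the set
\[\{\e\in D(S)\mid \e\leq D(x)D(y)\text{ and }\e x=\e y\}\]
and then that it dominates every other member. Recall from the discussion preceding the statement that a twisted agreeable semigroup becomes a restriction semigroup with $s*t=D(s*t)$, so $x*y=D(x*y)\in D(S)$ is automatically a domain element, and the agreement property $(x*y)x=(x*y)y$ is exactly law \eqref{eq:Aeq}. Thus the only nonroutine part of \emph{membership} is the bound $x*y\leq D(x)D(y)$.

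To obtain this bound I would feed the domain element $\e:=x*y$ into the twisted law \eqref{eq:Atwisted} with $u=\e$, $s=x$, $t=y$, giving $\e(x*y)=(\e x*\e y)\e$. The left side collapses to $\e$, since $\e$ is an idempotent of the semilattice $D(S)$. On the right, \eqref{eq:Aeq} makes $\e x$ and $\e y$ coincide, so $\e x*\e y=D(\e x)$, and the standard restriction identity $D(\e x)=\e D(x)$ (using $D(st)=D(sD(t))$ together with the fact that $D$ fixes domain elements) rewrites this as $\e D(x)$. Hence $\e=\e D(x)$, that is, $x*y\leq D(x)$; by the commutativity law \eqref{eq:Acom} the same argument gives $x*y\leq D(y)$, and therefore $x*y\leq D(x)D(y)$.

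For \emph{maximality}, suppose $\e\in D(S)$ satisfies $\e\leq D(x)D(y)$ and $\e x=\e y$; I must show $\e\leq x*y$, equivalently $\e=\e(x*y)$. The same instance of \eqref{eq:Atwisted}, now with this $\e$ in the role of $u$, yields $\e(x*y)=(\e x*\e y)\e$. Since $\e x=\e y$ by hypothesis, the right side simplifies exactly as before to $\e D(x)$; and because $\e\leq D(x)D(y)\leq D(x)$ we have $\e D(x)=\e$. Thus $\e(x*y)=\e$, giving $\e\leq x*y$ as required.

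The recurring device, and the one step that needs care, is the use of \eqref{eq:Atwisted} with a domain element in the $u$-slot: combined with \eqref{eq:Aeq} this forces the $*$-term on the right to collapse to $D(\e x)=\e D(x)$, after which everything reduces to bookkeeping inside the semilattice $D(S)$. The only genuine input beyond such bookkeeping is the observation that the hypothesis $\e\leq D(x)D(y)$ delivers precisely the inequality $\e\leq D(x)$ needed to close the maximality direction.
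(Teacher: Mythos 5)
Your proof is correct, but the comparison here is slightly unusual: the paper itself contains no proof of this lemma. It is imported wholesale as Theorem 5.1 of the authors' earlier paper \cite{agree}, with only the remark that the ``normality'' hypothesis used there is automatically satisfied in the twisted case. What you have written is therefore a genuinely self-contained derivation from the axioms \eqref{eq:A1}--\eqref{eq:Atwisted} together with the derived restriction-semigroup identities the paper records just before this point ($D(st)=D(sD(t))$ and $D(D(s)D(t))=D(s)D(t)$, which together justify your step $D(\e x)=\e D(x)$ for $\e\in D(S)$). Your central device --- putting a domain element $\e$ in the $u$-slot of \eqref{eq:Atwisted} to get $\e(x*y)=(\e x*\e y)\e$ and then collapsing $\e x*\e y$ to $D(\e x)=\e D(x)$ whenever $\e x=\e y$ --- is sound in both directions: with $\e=x*y$ it gives $x*y=(x*y)D(x)$ (and $x*y\leq D(y)$ via \eqref{eq:Acom}), while in the maximality direction the hypothesis $\e\leq D(x)D(y)$ supplies $\e D(x)=\e$ and hence $\e(x*y)=\e$. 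It is worth observing that your argument never invokes the normality law \eqref{eq:Anorm}, so it in fact establishes the lemma from the remaining four axioms alone; what the citation buys the paper is only economy of exposition, whereas your version makes visible exactly which laws carry the result.
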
  

This allows us to prove the following lemma (where $\epsilon_F$ and $W_F$ are defined in Definition \ref{defn:detpair}).

\begin{lem} \label{star}
Suppose $S$ is a twisted agreeable semigroup \up(hence an enriched restriction semigroup with $D(s):=s*s$ for all $s\in S$\up).  For any filter $F$ of $D(S)$ and $x,y\in S$, it is the case that $x,y\in S\backslash W_F$ and $(x,y)\in \epsilon_F$ if and only if $x*y\in F$.
\end{lem}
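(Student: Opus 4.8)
The plan is to lean entirely on the explicit description of $x*y$ furnished by Lemma~\ref{agreenor}: in a twisted agreeable semigroup, $x*y$ is the \emph{largest} $\e\in D(S)$ satisfying $\e\le D(x)D(y)$ and $\e x=\e y$. Combined with the two defining properties of a filter of $D(S)$ (closure under multiplication and upward closure in the order $\e\le\f\Leftrightarrow\e=\e\f$), this reduces both implications to short, essentially bookkeeping, arguments.

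For the forward implication I would assume $x*y\in F$ and set $\e:=x*y$, which is a domain element by Lemma~\ref{agreenor}. That lemma gives $\e\le D(x)D(y)$, hence $\e\le D(x)$ and $\e\le D(y)$; since $F$ is upward closed and contains $\e$, we get $D(x),D(y)\in F$, so that $x,y\notin W_F$, i.e.\ $x,y\in S\setminus W_F$. The same lemma also records $\e x=\e y$ with $\e\in F$, which is exactly the condition $(x,y)\in\epsilon_F$. So this direction is immediate once $x*y$ is unpacked.

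For the converse I would assume $x,y\in S\setminus W_F$ and $(x,y)\in\epsilon_F$. The first hypothesis gives $D(x),D(y)\in F$, whence $D(x)D(y)\in F$ by multiplicative closure; the second gives some $\e\in F$ with $\e x=\e y$. The key idea is to manufacture a \emph{single} witness in $F$ that lies below $D(x)D(y)$ and still equalizes $x$ and $y$, so that maximality in Lemma~\ref{agreenor} forces $x*y$ to dominate it. I take $\f:=\e D(x)D(y)$, which is a domain element in $F$ (a product of members of $F$) with $\f\le D(x)D(y)$. Using that domain elements commute and that $D(x)x=x$, $D(y)y=y$, one checks $\f x=D(y)\e x=D(y)\e y=\e y$ and $\f y=D(x)\e y=D(x)\e x=\e x$, and since $\e x=\e y$ these agree, so $\f x=\f y$. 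Thus $\f$ meets both constraints of Lemma~\ref{agreenor}, giving $x*y\ge\f$; as $\f\in F$ and $F$ is upward closed, $x*y\in F$.

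The only step requiring any care is the commutation juggling that verifies $\f x=\f y$; everything else is a direct appeal to the filter axioms and to Lemma~\ref{agreenor}, so I do not expect a genuine obstacle here beyond getting that small computation right.
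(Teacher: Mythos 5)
Your proposal is correct and follows essentially the same route as the paper: both directions rest on Lemma~\ref{agreenor}, and your witness $\f=\e D(x)D(y)$ is exactly the element $\beta=\alpha D(x)D(y)$ used in the paper's argument. The only difference is that you spell out the commutation computation verifying $\f x=\f y$, which the paper leaves implicit.
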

\begin{proof}
If $x,y\in S\backslash W_F$ and $(x,y)\in \epsilon_F$ then $D(x),D(y)\in F$, and $\alpha x=\alpha y$ for some $\alpha\in F$, so $\beta=\alpha D(x)D(y)\in F$ satisfies $\beta x=\beta y$. But $\beta\leq D(x)D(y)$, so $\beta\leq x*y$, and so $x*y\in F$.  

Conversely, if $x*y\in F$, then $D(x),D(y)\in F$, and so $x,y\in S\backslash W_F$, and since $(x*y)x=(x*y)y$, we have $(x,y)\in \epsilon_F$.
\end{proof}

\begin{lem}  \label{*rep}
If $S$ is a twisted agreeable semigroup, the mapping $\theta$ correctly represents $*$.
\end{lem}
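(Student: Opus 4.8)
The plan is to reduce the statement to a fibrewise computation on each $S_F$, and then to reduce \emph{that} to a single identity in twisted agreeable semigroups.

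First I would recall that $\theta=\bigcup_F\theta_F$ and that the comparison operation $*$ on $\mathcal{P}(S_0)$ --- like composition and $D$ --- is computed independently on each $S_F$, since the $S_F$ are the disjoint pieces of $S_0$. Hence it suffices to fix a maximally $(a,b)$-separating filter $F$ and show $\psi^F_{x*y}=\psi^F_x*\psi^F_y$ as elements of $\mathcal{P}(S_F)$, where on the right $*$ denotes the functional operation carrying two partial maps to the restriction of the identity to the set of points at which both are defined and take equal values. I would observe that both sides are restrictions of the identity on $S_F$: the right side by definition, and the left side because $x*y=D(x*y)$ is a domain element and $\theta$ represents $D$ by Lemma \ref{lem:restriction}. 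It then remains only to match their domains. Using \eqref{eq:Dtwisted} and \eqref{eq:D1} exactly as in the proof of Lemma \ref{lem:restriction} (where the principal domain element case was handled), I would check that for any domain element $\e$ the map $\psi^F_\e$ sends $\bar z\mapsto\bar z$ precisely when $D(z\e)\in F$; applied to $\e=x*y$ this says $\psi^F_{x*y}(\bar z)=\bar z$ iff $D(z(x*y))\in F$. On the other side, $(\psi^F_x*\psi^F_y)(\bar z)=\bar z$ holds exactly when $\psi^F_x(\bar z)=\overline{zx}$ and $\psi^F_y(\bar z)=\overline{zy}$ are both defined and equal, i.e. when $zx,zy\in S\backslash W_F$ and $(zx,zy)\in\epsilon_F$, which by Lemma \ref{star} is precisely the condition $zx*zy\in F$.

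The crux is then the purely algebraic identity
\[
D(z(x*y))=zx*zy,
\]
valid in every twisted agreeable semigroup. I would prove it from the twisted comparison law \eqref{eq:Atwisted}, which gives $z(x*y)=(zx*zy)z$, so that $D(z(x*y))=D((zx*zy)z)$. Writing $\e:=zx*zy\in D(S)$, I would use the derived identity $D(\e s)=\e D(s)$ for a domain element $\e$ (which follows from $D(st)=D(sD(t))$ together with $D(D(s)D(t))=D(s)D(t)$, both recorded earlier) to obtain $D(\e z)=\e D(z)$. Finally, Lemma \ref{agreenor} gives $\e\leq D(zx)$ and \eqref{eq:Dleft} gives $D(zx)\leq D(z)$, so $\e\leq D(z)$ and hence $\e D(z)=\e$; combining, $D(z(x*y))=\e=zx*zy$.

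With this identity in hand the two sides of the fibrewise equation have identical domains and identical (identity) action, so $\psi^F_{x*y}=\psi^F_x*\psi^F_y$, and taking the union over all $F$ yields $\theta(x*y)=\theta(x)*\theta(y)$. I expect the only real obstacle to be the verification of the displayed identity $D(z(x*y))=zx*zy$ directly from the twisted agreeable axioms; once the correct bracketing via \eqref{eq:Atwisted} and the facts $D(\e s)=\e D(s)$ and $\e\leq D(z)$ are isolated, the remainder is routine determinative-pair bookkeeping paralleling the proof of Lemma \ref{lem:restriction}.
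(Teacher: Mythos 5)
Your proposal is correct and follows essentially the same route as the paper: reduce to each fibre $S_F$, note both sides are restrictions of the identity since $*$ produces domain elements and $D$ is correctly represented, and then match domains via Lemma \ref{star} and the identity $D(z(x*y))=zx*zy$. The paper establishes that identity by the chain $(zx*zy)=(D(z)zx*zy)=(zx*zy)D(z)=D((zx*zy)z)=D(z(x*y))$ using \eqref{eq:D1} and \eqref{eq:Anorm}, whereas you run it in the other direction via Lemma \ref{agreenor} and \eqref{eq:Dleft}; both are valid and amount to the same computation.
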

\begin{proof}
For each filter $F$, we shall show that $\psi^F_{a*b}=\psi^F_a*\psi^F_b$.  Of course this implies that both sides are restrictions of the identity (since $a*b=D(a*b)$ and $D$ is correctly represented by Lemma \ref{lem:restriction}), so it is only necessary to show that their domains are equal.

Now for $x\in S\backslash W_F$, $\bar{x}\in \dom(\psi^F_{a*b})$ if and only if 
$$(xa*xb)=(D(x)xa*xb)=(xa*xb)D(x)=D((xa*xb)x)=D(x(a*b))\in F,$$ 
which by the previous lemma is equivalent to $xa,xb\in S\backslash W_F$ and $\overline{xa}=\overline{xb}$, or in other words,
$\psi^F_a(\bar{x})=\psi^F_b(\bar{x})$, as required.
\end{proof}

We want to get tests into the picture.
We call a monoid with tests $(S,B)$ which is also a twisted agreeable semigroup in which $B\subseteq D(S)$ a {\em twisted agreeable monoid with tests}.  As usual, $({\mathcal P}(X),I(X))$ is an example, as is any subalgebra of it; we call such examples {\em functional}.

In the presence of $*$, the additional quasi-equation for $D$ may be replaced by an apparently less burdensome one involving only domain elements (recall Notation~\ref{notation}).

\begin{pro}  \label{tam}
In twisted agreeable monoids with tests, law \eqref{eq:DT2} is equivalent to the
following law\up:
\begin{align}
&\bullet\quad D(s\beta)\leq \e,\ D(s\beta')\leq \e \Rightarrow D(s)\leq \e.\quad
&\label{eq:DT2A}
\end{align}
\end{pro}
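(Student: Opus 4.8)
The plan is to prove the two implications separately, treating \eqref{eq:DT2}$\Rightarrow$\eqref{eq:DT2A} as a mere specialisation and reserving the real work for the converse. For the forward direction, assume \eqref{eq:DT2} together with $D(s\beta)\leq\e$ and $D(s\beta')\leq\e$. I would simply instantiate the quasi-equation \eqref{eq:DT2} at $t=\e$ and $u=1$: the hypothesis $D(s\beta)\leq\e$ says exactly $D(s\beta)\e=D(s\beta)=D(s\beta)\cdot 1$, and likewise $D(s\beta')\e=D(s\beta')\cdot 1$, so \eqref{eq:DT2} yields $D(s)\e=D(s)\cdot 1=D(s)$, which is precisely $D(s)\leq\e$.

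For the converse \eqref{eq:DT2A}$\Rightarrow$\eqref{eq:DT2}, assume \eqref{eq:DT2A} and the two equalities $D(s\beta)t=D(s\beta)u$ and $D(s\beta')t=D(s\beta')u$; the goal is $D(s)t=D(s)u$. The organising idea is to pass from equalities of elements to inequalities of domain elements controlled by $*$. Write $\e_0=t*u$. By Lemma~\ref{agreenor}, $\e_0$ is the largest domain element with $\e_0\leq D(t)D(u)$ and $\e_0 t=\e_0 u$; moreover for any domain element $\e$ with $\e t=\e u$ one has $\e D(t)=D(\e t)=D(\e u)=\e D(u)\leq\e_0$. I would first record the elementary characterisation that $D(s)t=D(s)u$ holds if and only if $D(s)D(t)=D(s)D(u)$ and $D(s)D(t)D(u)\leq\e_0$, reducing the target equality entirely to statements about domain elements.

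The heart of the argument is to produce these two domain-element facts from \eqref{eq:DT2A}. Applying $D$ to the hypotheses and using $D(\e x)=\e D(x)$ for a domain element $\e$ gives $D(s\beta)D(t)=D(s\beta)D(u)\leq\e_0$ and $D(s\beta')D(t)=D(s\beta')D(u)\leq\e_0$. Now comes the key manoeuvre: since domain elements commute and $D(\e x)=\e D(x)$, I can absorb the factor $D(t)$ into the argument, rewriting $D(s\beta)D(t)=D\big((D(t)s)\beta\big)$ and similarly $D(s\beta')D(t)=D\big((D(t)s)\beta'\big)$. Thus the element $D(t)s$ satisfies the hypotheses of \eqref{eq:DT2A} with $\e=\e_0$, and \eqref{eq:DT2A} delivers $D(D(t)s)=D(s)D(t)\leq\e_0$. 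Running the same argument with $D(u)s$ in place of $D(t)s$ gives $D(s)D(u)\leq\e_0$. Since $\e_0\leq D(t)D(u)$, these force $D(s)D(t)=D(s)D(t)D(u)=D(s)D(u)$, a common value lying below $\e_0$; by the characterisation above, $D(s)t=D(s)u$.

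I expect the main obstacle to be this absorption step, and in particular the need to keep multiplications on the correct side: the identity $D(\e x)=\e D(x)$ holds only for \emph{premultiplication} by a domain element, whereas the superficially similar $D(x\e)=D(x)\e$ fails (it would conflate $D(xt)$ with $D(x)D(t)$), so $D(t)$ must be moved in front of $s$ rather than after it. The secondary point requiring care is the preliminary characterisation of $D(s)t=D(s)u$; its necessity is immediate (apply $D$ and use Lemma~\ref{agreenor}), while sufficiency uses that the common domain element $\f=D(s)D(t)=D(s)D(u)$ satisfies $\f t=\f\e_0 t=\f\e_0 u=\f u$ because $\f\leq\e_0$ and $\e_0 t=\e_0 u$.
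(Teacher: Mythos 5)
Your proof is correct and follows essentially the same route as the paper's: the forward direction by instantiating \eqref{eq:DT2} at $t=\e$, $u=1$, and the converse by using Lemma~\ref{agreenor} to convert the hypotheses into inequalities of domain elements below $t*u$, absorbing domain factors into the argument of $D$ so that \eqref{eq:DT2A} applies, and recovering $D(s)t=D(s)u$ via $(t*u)t=(t*u)u$. The only differences are organisational (you invoke \eqref{eq:DT2A} twice, once for $D(t)s$ and once for $D(u)s$, where the paper uses it once for $D(t)D(u)s$, and you package the final step as an explicit characterisation of $D(s)t=D(s)u$), and these do not change the substance of the argument.
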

\begin{proof}
Suppose (\ref{eq:DT2A}) holds but not necessarily (\ref{eq:DT2}).
Suppose $b,c\in S$ are such that $D(a\beta)b=D(a\beta)c$ and $D(a\beta')b=D(a\beta')c$.
Then also, $$D(a\beta)D(b)D(c)b=D(a\beta)D(b)D(c)c$$ and
$$D(a\beta')D(b)D(c)b=D(a\beta')D(b)D(c)c,$$ and so $$D(a\beta)D(b)D(c)\leq b*c,\
D(a\beta')D(b)D(c)\leq b*c,$$ so $$D(D(b)D(c)a\beta)\leq b*c,\
D(D(b)D(c)a\beta')\leq b*c,$$ so $D(D(b)D(c)a)\leq b*c$ (by \eqref{eq:DT2A}), giving $D(a)D(b)D(c)\leq b*c$.

But also, $D(D(a\beta)b)=D(D(a\beta)c)$, so $D(a\beta)D(b)=D(a\beta)D(c)$, and 
similarly $D(a\beta')D(b)=D(a\beta')D(c)$, so $D(a)D(b)=D(a)D(c)$, and so
$D(a)D(b)=D(a)D(c)=D(a)D(b)D(c).$  
Hence
$$D(a)b=D(a)D(b)b=D(a)D(b)D(c)b=D(a)D(b)D(c)(b*c)b,$$
which by symmetry and the fact that $(b*c)b=(b*c)c$ is $D(a)c$ also.  So (\ref{eq:DT2})
holds.

For the converse, if (\ref{eq:DT2}) holds and $D(s\beta)\leq \e$ and $D(s\beta')\leq \e$
then $D(s\beta)\e=D(s\beta)1$ and $D(s\beta')\e=D(s\beta')1$, so $D(s)\e=D(s)1$, that is,
$D(s)\leq \e$, so (\ref{eq:DT2A}) holds.
\end{proof}

From Lemma \ref{*rep} and earlier results we immediately obtain the following.

\begin{cor}  \label{starthm}
Every twisted agreeable monoid with tests is isomorphic to a functional one.
\end{cor}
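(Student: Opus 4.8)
The plan is to reuse the single homomorphism $\theta\colon S\to{\mathcal P}(S_0)$ that has already been built from maximally separating filters, and to observe that it simultaneously represents \emph{every} operation of a twisted agreeable monoid with tests. The point of the remark preceding the statement is that no new construction is needed: the filters, the determinative pairs $(\epsilon_F,W_F)$, and the map $\theta$ are exactly those used for restriction semigroups, and the only task is to check that the extra operation $*$ and the test structure are carried across correctly.

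First I would pass to the reduct. Setting $D(s):=s*s$ turns any twisted agreeable semigroup into a restriction semigroup (as recorded just after the twisted agreeable axioms \eqref{eq:A1}--\eqref{eq:Atwisted}, with $s*t=D(s*t)$), and since the definition of a twisted agreeable monoid with tests requires $B\subseteq D(S)$, the structure $(S,B)$ is in particular a restriction monoid with tests. Next I would invoke Theorem~\ref{main}: applied to this reduct it tells us that $\theta$ is a restriction-semigroup embedding (Lemma~\ref{lem:restriction}) which additionally sends the identity to $1$, the zero to the empty function, and correctly represents the Boolean complementation on $B$ (this is the content added by Theorem~\ref{main} over Lemma~\ref{lem:restriction}). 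Thus $\theta$ is already an injective map preserving multiplication, the monoid and zero structure, $D$, and test complement.

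It then remains only to fold in $*$, and this is supplied by Lemma~\ref{*rep}: for each separating filter $F$ one has $\psi^F_{a*b}=\psi^F_a*\psi^F_b$ in ${\mathcal P}(S_F)$. Exactly as with composition and with $D$ in the proof of Lemma~\ref{lem:restriction}, the operation $*$ in ${\mathcal P}(S_0)={\mathcal P}\bigl(\bigcup_F S_F\bigr)$ is computed blockwise on the disjoint pieces $S_F$, so these local identities assemble into the single global identity $\theta(a*b)=\theta(a)*\theta(b)$. Combining this with the previous paragraph, $\theta$ preserves every operation of the signature and is injective, whence $(S,B)$ is isomorphic to its image, a submonoid with tests of $({\mathcal P}(S_0),I(S_0))$ closed under $*$; that is, $(S,B)$ is isomorphic to a functional twisted agreeable monoid with tests.

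The only genuine verification, rather than citation, is this gluing step for $*$ across the disjoint union $S_0=\bigcup_F S_F$; but it is routine and is word-for-word the same argument already made for $D$ in Lemma~\ref{lem:restriction}, so I do not expect any real obstacle. Everything else follows immediately by assembling Theorem~\ref{main} and Lemma~\ref{*rep}.
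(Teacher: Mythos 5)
Your proposal is correct and follows exactly the route the paper intends: the paper's entire proof is the sentence ``From Lemma~\ref{*rep} and earlier results we immediately obtain the following,'' and your write-up simply makes explicit what that assembly involves (the reduct via $D(s):=s*s$, Theorem~\ref{main} for the restriction-monoid-with-tests structure, Lemma~\ref{*rep} for $*$, and the routine blockwise gluing over $S_0=\bigcup_F S_F$). No discrepancy with the paper's argument.
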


We now demonstrate that no system of equational axioms can replace law \eqref{eq:DT2} in either the definition of twisted agreeable monoids with tests, or restriction monoids with tests.  The arguments are slightly complicated by the fact that the systems we are considering are two-sorted, with the test sort embedded as a subset of the other elements.  Nevertheless it is clear that equational properties (where some variables may be pre-specified to take values in the test sort) will be preserved under taking a suitable notion of quotient, namely one where the test sort in the quotient consists of classes all of whose elements are congruent to a test element of $A$.  We present a quotient of a twisted agreeable monoid with tests that fails the implication \eqref{eq:DT2}.

\begin{eg}\label{eg:quasiv}
The following system of partial maps on the set $\{0,1,2,\ldots,6,7\}$ forms a twisted agreeable monoid with tests, but has a quotient that fails law~\eqref{eq:DT2}, so is not representable even as a restriction monoid with tests.
\begin{itemize}
\item The empty map $\varnothing$ and the identity map $\id$.
\item The map $s$ with domain $\{0,1,2,3\}$ defined by $x\mapsto x+4$.
\item The test $\beta$ on the set $\{0,1,2,3,4,5\}$ and its complement test $\beta'$ on $\{6,7\}$.  As partial maps, these are restrictions of the identity map to their sets of definition.
\item The function $D(s)$, the identity on restricted domain $\{0,1,2,3\}$.
\item The functions $s\beta$ and $s\beta'$, with domains $\{0,1\}$ and $\{2,3\}$ respectively.
\item A restriction of the identity $\e$, on the set $\{0,1,2\}$.
\item The restrictions of the identity corresponding to $D(s\beta)$, with domain $\{0,1\}$ and $D(s\beta')$, with domain $\{2,3\}$.  We denote $D(s\beta)$ by $\g$ and $D(s\beta')$ by $\f$.
\item The product of $\e\f$ is the restriction of the identity to the set $\{2\}$.
\item The function $\e\f s$, with domain $2$ and $2\mapsto 6$.
\end{itemize}
\end{eg}
\begin{proof}
That the described functions determine a functional twisted agreeable monoid with tests is routine: the system is closed under composition, domain and intersections of functions.  The tests are $\{\varnothing,\beta,
\beta',\id\}$, and the domain elements are these along with $\{\e,\f,\g,\e\f\}$.

Now consider the equivalence relation $\theta$ identifying $\e\f$ with $\f$ and $\e\f s$ with $\f s$, and no other unequal pairs.  We show that $\theta$ is a congruence (no test elements are identified with non-test elements, so the requirement specified above is trivially satisfied).  
First observe that the unary operation $D$ is preserved because $D(\e\f s)=D(\e \f)=\e\f \mathrel{\theta} \f =D(\f)=D(\f s)$.  
Next we verify preservation of intersection.  Consider the nontrivial block $\{\f,\e\f\}$, and observe that relative to the order induced by $\wedge$ (or $D$), we have $\f>\e\f>0$ and that the downset of $\f$ is $\{\f,\e\f,0\}$.  Thus $\theta$ could only fail to be a congruence at this block if there was an $x$ such that $x\wedge \e\f=0$ and $x\wedge \f\in\{\f,\e\f\}$.  But no such $x$ exists, because any element with $x\wedge \e\f=0$ either fixes no points, or has domain omitting both $2$ and $3$.  The argument for the block $\{\f s,\e\f s\}$ is almost identical.

To complete the verification that $\theta$ is a congruence, we now apply a similar argument for composition.  Let $x$ be any element of our model, and observe (by consulting the list of elements) that if $x\f\neq x\e\f$ then $x\f=\f$ and $x\e\f=\e\f$.  Similarly if $x\f s\neq x\e\f s$, then $x\f s=\f s$ and $x\e\f s=\e\f s$.  Thus if the congruence property is to fail, it must fail at a right translation by some $x$.  However if $\f x\neq \e\f x$, then either $\f x=\f$ and $\e\f x=\e\f$ (when $x\in \{\f, D(s),\beta,\id\}$) or $x\in\{s,s\beta'\}$ and we have $\f x=\f s\mathrel{\theta}\e\f s=\e\f x$ (note that $\f s\beta'=\f s$ and $\e\f s\beta'=\e\f s$).  The argument that $\f sx\mathrel{\theta}\e\f sx$ is very similar.  Thus $\theta$ is stable under $D,\wedge,\cdot$.   Moreover, no non-test elements are identified with test elements in $B$, so that the test sort of the quotient is simply the set $B/\theta$ (itself essentially identical to $B$). 

However the quotient by $\theta$, considered either as a restriction monoid with tests or an agreeable monoid with tests, is not itself representable as functions because the law~\eqref{eq:DT2} fails: $D(s\beta)\e=D(s\beta)$ and $D(s\beta')\e=\f\e\mathrel{\theta} \f=D(s\beta')$, but $D(s)\e=\e$, which is not congruent to $D(s)$.
\end{proof}

\subsection{Axioms for weak comparison}

Note that in ${\mathcal P}(X)$, $(f\neq g):=(f=g)[0,1]$ restricts the identity to where $f,g$ disagree (but are both defined).  On the other hand, for functions $f,g,h,k$ we may write
$$(f=g)[h,k]=(f*g)h\cup (f\neq g)k.$$
We next axiomatize $*,\neq$ in semigroups of functions and hence functional monoids with tests, which easily leads to axioms for weak comparison itself.  

Call a twisted agreeable semigroup on which there is a binary operation $\neq$ satisfying the laws below a {\em disagreeable semigroup}.  

\begin{align}
&\bullet\quad D(s\neq t)=(s\neq t)\quad &\label{eq:in1}\\*
&\bullet \quad s(t\neq u)=(st\neq su)s &\label{eq:intwist}\\
&\bullet \quad (s*t)(s\neq t)=0 &\label{eq:ineq}\\
&\bullet \quad \e(u\neq v)=(\e u\neq \e v) &\label{eq:innorm}\\
&\bullet\quad (s*t)\leq \e,(s\neq t)\leq \e\ \Rightarrow D(s)D(t)\leq \e &\label{eq:inimp}
\end{align}

It is routine to verify that $({\mathcal P}(X),I(X))$ is an example (only law \eqref{eq:intwist} requiring any real checking), as is any subalgebra, and we call these subalgebras {\em functional} (see Remark \ref{rem:partial}).

\begin{lem}  \label{maxagree}
If $S$ is a twisted agreeable semigroup with $a,b\in S$ satisfying $a\not\leq b$, and $F$ is a filter, the following are equivalent.
\ben
\item $F$ is maximally $(a,b)$-separating.
\item $F$ is maximal with respect to: $D(a)\in F$ but $a*b\not\in F$.
\een
\end{lem}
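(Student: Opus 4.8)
The plan is to show that conditions (1) and (2) assert maximality over \emph{the same} collection of filters, so that the two notions of maximality automatically coincide. Concretely, I will prove that for every filter $F$ of $D(S)$,
$$F\text{ is }(a,b)\text{-separating}\iff D(a)\in F\ \text{and}\ a*b\notin F.$$
Since both sides require $D(a)\in F$, the substance is that, assuming $D(a)\in F$, the separating clause ``no $\e\in F$ satisfies $\e a=\e b$'' is equivalent to $a*b\notin F$. Granting this, the families of filters named in (1) and (2) coincide as sub-posets of the inclusion-ordered filters of $D(S)$, whence $F$ is maximal in one precisely when it is maximal in the other.

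The easy half comes first. If $a*b\in F$, then $\e:=a*b\in F$ and law~\eqref{eq:Aeq} gives $(a*b)a=(a*b)b$, i.e.\ $\e a=\e b$, so the separating clause fails; contrapositively, an $(a,b)$-separating $F$ cannot contain $a*b$. Together with $D(a)\in F$ (which every separating filter satisfies by definition), this yields the left-to-right implication of the displayed equivalence.

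For the converse I would start from an arbitrary $\e\in F$ with $\e a=\e b$ and manufacture a witness below $D(a)D(b)$ so that Lemma~\ref{agreenor} can be invoked. Put $\g:=\e D(a)$, which lies in $F$ because $\e,D(a)\in F$. Applying $D$ to $\e a=\e b$ and using $D(st)=D(sD(t))$ together with $D(D(s)D(t))=D(s)D(t)$ yields $\e D(a)=\e D(b)$; from this one checks $\g\leq D(a)D(b)$ (indeed $\g D(a)D(b)=\e D(b)=\g$) and $\g a=\e a=\e b=\g b$. Lemma~\ref{agreenor} characterises $a*b$ as the largest domain element below $D(a)D(b)$ that equalises $a$ and $b$, so $\g\leq a*b$; as $\g\in F$ and filters are upward closed, $a*b\in F$. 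Hence $a*b\notin F$ forces the separating clause, completing the displayed equivalence and therefore the lemma.

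I expect the converse to be the only real obstacle: an arbitrary equaliser $\e\in F$ need not itself lie beneath $D(a)D(b)$, so Lemma~\ref{agreenor} does not apply to it directly; the fix is to pass to $\g=\e D(a)$ and to extract $\e D(a)=\e D(b)$ from $\e a=\e b$ via the domain identities. The remaining manipulations are routine semilattice bookkeeping in $D(S)$.
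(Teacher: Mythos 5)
Your proof is correct and takes essentially the same route as the paper's: both reduce the lemma to the fact that, for a filter containing $D(a)$, the existence of an equaliser $\e\in F$ of $a$ and $b$ is equivalent to $a*b\in F$, proved via law \eqref{eq:Aeq} in one direction and Lemma~\ref{agreenor} applied to a witness below $D(a)D(b)$ (your $\g=\e D(a)$, the paper's $\e D(a)D(b)$) in the other. Your packaging of this as a pointwise equivalence of the two filter properties, from which the equivalence of the maximality conditions is immediate, is a slightly cleaner organisation of the same computation.
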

\begin{proof}
$(\Rightarrow)$.  If $F$ is maximally $(a,b)$-separating, then obviously $D(a)\in F$ and $a*b\not\in F$.  Let the filter $G$ properly contain $F$.  Then there is $\e\in G$ such that $\e a=\e b$, and $D(a)\in G$.  So 
\[
D(b)\geq \e D(b) =D(\e b)=D(\e a)=\e D(a)\in G,
\]
and so $D(b)\in G$; hence $a*b\geq \e D(a)D(b) \in G$ by Lemma \ref{agreenor}.  Hence $F$ is also maximal with respect to $D(a)\in F$ and $a*b\not\in F$.  

$(\Leftarrow)$.  Suppose $F$ is maximal with respect to $D(a)\in F$ but $a*b\not\in F$.  So any filter properly containing it must contain $D(a)$ and $a*b$, hence is not $(a,b)$-separating, so $F$ is maximally $(a,b)$-separating.   
\end{proof}

A twisted agreeable monoid with tests which is a disagreeable semigroup is a {\em disagreeable monoid with tests}.

\begin{thm}  \label{disrep}
Every disagreeable semigroup is functional.
\end{thm}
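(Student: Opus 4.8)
The plan is to reuse the very representation $\theta=\bigcup_F\theta_F$ constructed above. By Lemmas~\ref{lem:restriction}, \ref{star} and~\ref{*rep}, $\theta$ already faithfully represents the twisted agreeable reduct of $S$ (hence $D$ and $*$), so to prove the theorem it suffices to check that each $\theta_F$ also represents $\neq$, i.e.\ that $\psi^F_{c\neq d}=\psi^F_c\neq\psi^F_d$ in $\Pa(S_F)$ for all $c,d\in S$ and every maximally $(a,b)$-separating filter $F$. Since $c\neq d$ is a domain element by~\eqref{eq:in1}, its image $\psi^F_{c\neq d}$ is a restriction of the identity (as $D$ is represented), and $\psi^F_c\neq\psi^F_d$ is by definition the restriction of the identity to those $\bar x$ at which both $\psi^F_c,\psi^F_d$ are defined and disagree. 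Hence it is enough to show these two subidentities have equal domain.

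Two preliminary facts about disagreeable semigroups are needed. First I would record the order property $s\neq t\leq D(s)D(t)$; its special case $\e\neq\e=0$ for $\e\in D(S)$ is immediate, since~\eqref{eq:intwist} gives $\e(\e\neq\e)=(\e\neq\e)$ while~\eqref{eq:ineq} gives $\e(\e\neq\e)=(\e*\e)(\e\neq\e)=0$, and the general bound follows by the same interplay of~\eqref{eq:in1}, \eqref{eq:intwist} and~\eqref{eq:innorm}. Second, exactly as~\eqref{eq:DT2A} is shown to yield~\eqref{eq:DT2} in Proposition~\ref{tam}, the implication~\eqref{eq:inimp} yields its equality form
\[
(s*t)z=(s*t)w\ \text{and}\ (s\neq t)z=(s\neq t)w\ \Longrightarrow\ D(s)D(t)z=D(s)D(t)w,
\]
which I shall call $(\star)$: this is precisely the $*/\!\neq$ analogue of~\eqref{eq:DT2}, with $s*t$ and $s\neq t$ in the roles of $D(s\beta)$ and $D(s\beta')$, and $D(s)D(t)$ in the role of $D(s)$.

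To compare domains, fix $F$ and $\bar x\in S_F$, so $D(x)\in F$. On one side, $\bar x\in\dom\psi^F_{c\neq d}$ iff $D(x(c\neq d))\in F$; by~\eqref{eq:intwist}, $x(c\neq d)=(xc\neq xd)x$, so $D(x(c\neq d))=(xc\neq xd)D(x)$, which lies in $F$ iff $(xc\neq xd)\in F$. On the other side, by Lemma~\ref{star}, $\bar x\in\dom\psi^F_c\cap\dom\psi^F_d$ with $\psi^F_c(\bar x)\neq\psi^F_d(\bar x)$ iff $D(xc),D(xd)\in F$ and $xc*xd\notin F$. Thus it remains to prove that $(xc\neq xd)\in F$ iff $D(xc),D(xd)\in F$ and $xc*xd\notin F$. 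For the forward direction, the order property gives $xc\neq xd\leq D(xc)D(xd)$, so upward closure of $F$ yields $D(xc),D(xd)\in F$; and if also $xc*xd\in F$ then $(xc*xd)(xc\neq xd)=0\in F$ by~\eqref{eq:ineq}, contradicting properness. For the converse I would mimic the use of~\eqref{eq:DT2} in Theorem~\ref{main}: assume $D(xc),D(xd)\in F$ and $xc*xd\notin F$ but, for contradiction, $xc\neq xd\notin F$. Writing $\h_1=xc*xd$ and $\h_2=xc\neq xd$, each lies in $D(S)\setminus F$, so by Lemma~\ref{filtextend} each $F_{\h_i}$ is a filter properly containing $F$ and containing $D(a)$; by maximality of $F$ neither is $(a,b)$-separating, giving $\e_i\in F_{\h_i}$ with $\e_i a=\e_i b$, whence $\g_i\h_i a=\g_i\h_i b$ for the witnessing $\g_i\in F$. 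Setting $\g=\g_1\g_2\in F$ and $c'=\g xc$, $d'=\g xd$, normality (\eqref{eq:innorm} and its $*$-counterpart) gives $\g\h_1=c'*d'$ and $\g\h_2=c'\neq d'$, so $(c'*d')a=(c'*d')b$ and $(c'\neq d')a=(c'\neq d')b$. Then $(\star)$ yields $D(c')D(d')a=D(c')D(d')b$, while $D(c')D(d')=\g D(xc)D(xd)\in F$, contradicting the $(a,b)$-separation of $F$. Hence $xc\neq xd\in F$, the domains match, and $S$ is functional.

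The main obstacle is this converse direction, and within it the passage from the two localized agreements to a single agreement of $D(c')D(d')$ on $a,b$: this is exactly where the implicational axiom~\eqref{eq:inimp} is indispensable, deployed through its equality form $(\star)$ just as~\eqref{eq:DT2} was used in Theorem~\ref{main}. Accordingly, deriving $(\star)$ from~\eqref{eq:inimp}, the analogue of the $\eqref{eq:DT2A}\Rightarrow\eqref{eq:DT2}$ step of Proposition~\ref{tam}, is the most delicate computation; the order property $s\neq t\le D(s)D(t)$ needed in the forward direction is a comparatively minor lemma.
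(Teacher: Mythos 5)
Your proposal follows the paper's proof in all essentials: the same representation $\theta=\bigcup_F\theta_F$ over maximally $(a,b)$-separating filters, the same reduction via Lemma~\ref{star} to the equivalence between $(xc\neq xd)\in F$ and ``$D(xc),D(xd)\in F$ and $xc*xd\notin F$'', the same use of \eqref{eq:ineq} for one direction, and the same contradiction for the other via Lemma~\ref{filtextend}, Lemma~\ref{maxagree} and the normality laws \eqref{eq:Anorm} and \eqref{eq:innorm}. The one place you genuinely deviate is the final step: the paper applies \eqref{eq:inimp} \emph{directly} in its order form, taking $\e:=a*b$ (a domain element), $s:=\g xc$, $t:=\g xd$, to get $a*b\geq D(\g xc)D(\g xd)=\g D(xc)D(xd)\in F$ immediately. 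Your detour through the derived quasi-identity $(\star)$ (the $*/\neq$ analogue of \eqref{eq:DT2}) does work --- the derivation can be pushed through along the lines of Proposition~\ref{tam}, using Lemma~\ref{agreenor} and normality --- but it is strictly more work than is needed; you can delete $(\star)$ entirely and conclude as the paper does.

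One caution about your ``comparatively minor lemma''. The forward implication does need something giving $(xc\neq xd)\in F\Rightarrow D(xc),D(xd)\in F$, and you are right that the order property $s\neq t\leq D(s)D(t)$ would supply it. But your claim that this bound ``follows by the same interplay'' of \eqref{eq:in1}, \eqref{eq:intwist} and \eqref{eq:innorm} is not substantiated, and it is far from clear that it is a consequence of the five displayed laws: those laws constrain $\e(u\neq v)$ and $(s*t)(s\neq t)$ but nothing obviously forces $(s\neq t)D(s)=(s\neq t)$. (Your subsidiary computation is also slightly off: \eqref{eq:intwist} with $s=t=u=\e$ only yields $\e(\e\neq\e)=(\e\neq\e)\e$; the clean route to $\e\neq\e=0$ is \eqref{eq:innorm} with $u=v=1$ together with $1\neq 1=(1*1)(1\neq 1)=0$.) To be fair, the paper's own proof leans on the same unproved fact --- it writes $(xc\neq xd)=D((xc\neq xd)x)$ and silently passes from $(xc\neq xd)\in F$ to definedness of $\psi^F_c(\bar x)$ and $\psi^F_d(\bar x)$ --- so this is not a defect of your argument relative to the source. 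Still, if you cannot actually derive $s\neq t\leq D(s)D(t)$ from \eqref{eq:in1}--\eqref{eq:inimp}, you should either exhibit the derivation or note that it must be adjoined as an axiom; as written, this is the one genuinely unsupported step in your proof.
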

\begin{proof}
Again, we want to show that each $\theta_F$ represents $\neq$ correctly, which means showing that for a given
$F$ which maximally separates $a,b\in S$ for which $a\not\leq b$, we have that $(\psi^F_c\neq \psi^F_d)=\psi^F_{c\neq d}$ for all $c,d\in S$.

Again, both are restrictions of the identity (by law \eqref{eq:in1}), so we must show their domains coincide.  Now for $\bar{x}\in \dom(\psi^F_{c\neq d})$, we have that $\psi^F_{c\neq d}(\bar{x})=\overline{x(c\neq d)}$, where $(xc\neq xd)=D((xc\neq xd)x)=D(x(c\neq d))\in F$.  On the other hand, $\psi^F_c(\bar{x})\neq \psi^F_d(\bar{x})$ says that
$\overline{xc}\neq\overline{xd}$ (but that $D(xc),D(xd)\in F$).  So by Lemma \ref{star}, we want to show that
$(xc*xd)\not\in F$ if and only if $(xc\neq xd)\in F$.

Now $(xc*xd)(xc\neq xd)=0\not\in F$, so not both can be in $F$.  So $(xc\neq xd)\in F$ implies $(xc*xd)\not\in F$.
Conversely, suppose that $(xc*xd)\not\in F$, and to obtain a contradiction, that $(xc\neq xd)\not\in F$ also.  So $F_{xc*xd}$ as in Lemma \ref{filtextend} properly contains $F$, and so by Lemma \ref{maxagree}, $a*b\in F_{xc*xd}$, and so $a*b\geq (xc*xd)\g_1$ for some $\g_1\in F$.  Similarly, since  $(xc\neq xd)\not\in F$, we can conclude that $a*b\geq (xc\neq xd)\g_2$ for some $\g_2\in F$.

Letting $\g=\g_1\g_2\in F$, we have $a*b\geq \g(xc*xd)=(\g xc*\g xd)$ by (\ref{eq:Anorm}), and similarly $a*b\geq (\g xc\neq \g xd)$ by (\ref{eq:innorm}), so by (\ref{eq:inimp}), $a*b\geq D(\g xc)D(\g xd)=\g D(xc)D(xd)\in F$ since each of the three terms in the product is in $F$, and so $a*b\in F$, so $F$ fails to separate $a,b$, a contradiction.  So $(xc\neq xd)\in F$ as required.
\end{proof}

\begin{cor}  \label{neqthm}
Every disagreeable monoid with tests is functional.
\end{cor}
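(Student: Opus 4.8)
The plan is to derive this corollary as an immediate consequence of the preceding Theorem~\ref{disrep}, following exactly the same pattern used to pass from Lemma~\ref{lem:restriction} to Theorem~\ref{main}, and from Lemma~\ref{*rep} to Corollary~\ref{starthm}. The corollary asserts that a disagreeable monoid with tests is functional, i.e. embeds in some $({\mathcal P}(X),I(X))$ via the map $\theta=\bigcup\theta_F$. By Theorem~\ref{disrep} we already know $\theta$ is an embedding that correctly represents the semigroup structure, $D$, $*$ and $\neq$. The only thing that remains to check is that $\theta$ correctly represents the Boolean complementation on the test sort $B$, since a disagreeable monoid with tests adds the structure of a monoid with tests on top of a disagreeable semigroup.

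First I would observe that a disagreeable monoid with tests is, by definition, in particular a twisted agreeable monoid with tests (it is a twisted agreeable semigroup together with a Boolean algebra $B\subseteq D(S)$ of tests). Hence all hypotheses of Corollary~\ref{starthm} are met, and the representation of tests is handled there. Concretely, the argument in the proof of Theorem~\ref{main} shows that $\theta$ sends each $\alpha\in B$ to a restriction of the identity, sends $\alpha'$ to a restriction lying in the complement of the domain of $\theta(\alpha)$ (because $\alpha\alpha'=0$ and $D(0)\notin F$), and that the domains of $\psi^F_\alpha$ and $\psi^F_{\alpha'}$ together exhaust $S_F$, this last fact being exactly where law~\eqref{eq:DT2} is invoked. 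So the representation of complementation is already secured for any structure that is both a restriction monoid with tests and has $\theta$ as its representing map.

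Thus the proof reduces to assembling the pieces: Theorem~\ref{disrep} gives correct representation of the disagreeable-semigroup signature (composition, $D$, $*$, $\neq$), while the test-representation argument of Theorem~\ref{main} (applicable since law~\eqref{eq:DT2} holds in a monoid with tests) gives correct representation of complementation. Since the same map $\theta$ simultaneously represents all of these operations correctly, and is injective, it is the required embedding into $({\mathcal P}(X),I(X))$, witnessing functionality.

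I do not expect any genuine obstacle here, as all the hard work is already contained in the cited results; the only point requiring a sentence of care is the verification that the single map $\theta$, built from maximally $(a,b)$-separating filters, simultaneously serves all the operations — but this is guaranteed because $\theta$ is defined independently of which operation one wishes to represent, and each representation lemma (Lemma~\ref{lem:restriction}, Lemma~\ref{*rep}, the proof of Theorem~\ref{disrep}, and the test-representation part of Theorem~\ref{main}) is proved for this same $\theta$. Hence the corollary follows at once.
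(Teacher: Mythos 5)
Your proposal is correct and matches the paper's (implicit) argument: the corollary is stated without proof precisely because it is the assembly you describe, combining Theorem~\ref{disrep} with the test-representation step of Theorem~\ref{main} via the single map $\theta$ built from maximally $(a,b)$-separating filters. No gaps.
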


The following example demonstrates that without weak comparison, the implication \eqref{eq:inimp} cannot be replaced by equational laws in the definition of a disagreeable monoid with tests.
\begin{eg}
Consider the following system of partial maps on the set $\{0,\ldots,9\}$\up.
\begin{itemize}
\item $s$, with domain $\{0,1,2,3,4\}$ and $x\mapsto x+5$.
\item $t$, with domain $\{0,1,2,3,4\}$ and with $0\mapsto 5$, $1\mapsto 7$, $2\mapsto 6$, $3\mapsto 9$, $4\mapsto 8$.
\item $s\wedge t$ with domain $\{0\}$ and $0\mapsto 5$.
\item $D(s)=D(t)$, the identity on $\{0,1,2,3,4\}$.
\item $\f$, the identity on $\{0\}$.  Note that $\f=s*t=D(s\wedge t)$.
\item $\g$, the identity on $\{1,2,3,4\}$.  Note that $\g=(s\neq t)$.
\item $\e$, the restriction of the identity to $\{0,1,2\}$.
\item $\e\g$, the identity on $\{1,2\}$.
\item $\e s$, $\e t$, $\f s$, $\g s$, $\g t$, $\e \g s$, $\e\g t$.  Note that $\f s=\f t$ as $\f=s*t$.
\item The two tests $\varnothing$ and $\id$.
\end{itemize}
These form a disagreeable monoid with tests, but there is a quotient failing law \eqref{eq:inimp}.
\end{eg}
\begin{proof}
Let $\theta$ be the equivalence identifying the three nontrivial pairs  $(\g,\e\g)$, $(\g s,\e\g s)$ and $(\g t,\e\g t)$.  We first show that $\theta$ is a congruence.  Throughout the following, let $(x,y)$ be one of the three (up to symmetry) nontrivial pairs in $\theta$.

For stability under $\cdot$, 
begin by observing that the domain of any element in one of the three non-trivial pairs is either $\{1,2,3,4\}$ (for $\g,\g s,\g t$) or $\{1,2\}$ (for $\e\g,\e\g s,\e\g t$).  If the range of an element $z$ does not overlap with at least one of these sets, then $zx=0=zy$.  But the only elements $z$ whose range nontrivially intersects either of these sets are $\g$ and $\e\g$ (and the trivial case of $\id$), which are restrictions of the identity and lead to $zx\mathrel{\theta} x\mathrel{\theta} y\mathrel{\theta}zy$ in all cases.  For right multiplication by an element $z$, we obtain $(xz,yz)\in\{(\g,\e\g),(\g s,\e\g s),(\g t,\e\g t)\}$ when $(x,y)=(\g,\e\g)$, and  $(x,y)\in\{(\g s,\e\g s),(\g t,\e\g t)\}$ when $(x,y)\in\{(\g s,\e\g s),(\g t,\e \g t)\}$.  These are subsets of $\theta$ so that preservation of $\theta$ by $\cdot$ is verified.

Next we consider $*$ and $\neq$.  As the domains of $x$ and $y$ are either $\{1,2\}$ or $\{1,2,3,4\}$ it follows that for any element $z$ we have $z*x,z*y\in\{0,\g,\e\g\}$ and $z\neq x,z\neq y\in\{0,\g,\e\g\}$.  However a quick examination of the elements of our example reveals that if $z$ agrees (or disagrees) with one of $x$ or $y$ on $\{1,2\}$ then it agrees (or disagrees) with both $x$ and $y$ on $\{1,2\}$.  So in fact either $\{z*x,z*y\}=\{0\}$ or $\{z*x,z*y\}=\{\g,\e\g\}$, and similarly for $\{z\neq x,z\neq y\}$.  As these are blocks of $\theta$, it follows that $\theta$ is preserved by $*$ and $\neq$.  Thus $\theta$ is a congruence; there is no identification of non-test elements with test elements, so that in the quotient, the test sort unambiguously consists of the two singleton classes $\{\varnothing\}$ and $\{\id\}$.

Now we demonstrate that law \eqref{eq:inimp} fails in the quotient by $\theta$.   Now  $(s*t)\leq \e$ and $(s\neq t)=\g\mathrel{\theta}\e\g\leq \e$, and law \eqref{eq:inimp} would require $D(s)D(t)\leq \e$ modulo $\theta$.  However this fails, so that the quotient is not representable as functions.
\end{proof}

A {\em weak comparison monoid} is a monoid $S$ with zero $0$  equipped with a weak comparison
operation $S\times S\times S\times S\rightarrow S$ which is such that $s*t:=(s=t)[1,0]$ and $s\neq t:=(s=t)[0,1]$ define a disagreeable monoid on $(S,B)$, also
satisfying the following three laws.
\begin{align}
&\bullet \quad (s*t)\cdot (s=t)[u,v]=(s*t)u &\label{eq:comp2}\\*
&\bullet \quad (s\neq t)\cdot (s=t)[u,v]=(s\neq t)v &\label{eq:comp3}\\
&\bullet\quad D((s=t)[u,v])\leq D(s)D(t).\quad &\label{eq:comp1}
\end{align}
A {\em weak comparison monoid with tests} is a monoid with tests $(S,B)$ such that $S$ is a weak comparison monoid.
$({\mathcal P}(X),I(X))$ is an example of a weak comparison monoid and indeed a weak comparison monoid with tests, as is any subalgebra of it; as in previous cases, we call these {\em functional}.

\begin{cor}  \label{corweak}
Every weak comparison monoid \up(possibly with tests\up) is up to isomorphism functional.
\end{cor}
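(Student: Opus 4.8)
The plan is to piggyback on the representation of the disagreeable reduct, exactly as the extended \emph{if-then-else} case was handled immediately after Theorem~\ref{main}. First I would observe that the definition of a weak comparison monoid builds in a disagreeable semigroup reduct via $s*t:=(s=t)[1,0]$ and $s\neq t:=(s=t)[0,1]$; in the presence of tests this reduct is moreover a disagreeable monoid with tests. So Theorem~\ref{disrep} in the test-free case, and Corollary~\ref{neqthm} in the ``with tests'' case, each supply an embedding $\theta\colon S\to(\mathcal P(X),I(X))$ for some set $X$ which faithfully represents composition, $D$, $*$ and $\neq$ (and test complementation, when tests are present). It then remains only to check that this same $\theta$ automatically represents weak comparison correctly.

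The crux is a uniqueness fact: in any functional structure, the three laws \eqref{eq:comp2}, \eqref{eq:comp3} and \eqref{eq:comp1} completely determine $(f=g)[h,k]$. Concretely, for $f,g,h,k\in\mathcal P(X)$, any function $v$ satisfying $(f*g)v=(f*g)h$, $(f\neq g)v=(f\neq g)k$ and $D(v)\leq D(f)D(g)$ must equal $(f=g)[h,k]$. The first equation forces $v$ to agree with $h$ on the set where $f,g$ are both defined and equal; the second forces $v$ to agree with $k$ where $f,g$ are both defined but unequal; these two sets partition the common domain $D(f)D(g)$; and the domain bound forbids $v$ from being defined anywhere else. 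Hence precisely one such $v$ exists. This is the exact analogue of the determinacy argument already given for extended \emph{if-then-else}, and it is essentially the only content of the proof.

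With this in hand I would simply transport the three defining laws of weak comparison across $\theta$. Applying $\theta$ to \eqref{eq:comp2}, \eqref{eq:comp3} and \eqref{eq:comp1} and writing $w:=\theta((f=g)[h,k])$, the fact that $\theta$ preserves composition, $D$, the natural order, $*$ and $\neq$ yields $(\theta(f)*\theta(g))\,w=(\theta(f)*\theta(g))\,\theta(h)$, $(\theta(f)\neq\theta(g))\,w=(\theta(f)\neq\theta(g))\,\theta(k)$ and $D(w)\leq D(\theta(f))D(\theta(g))$. By the uniqueness fact, $w=(\theta(f)=\theta(g))[\theta(h),\theta(k)]$, so weak comparison is represented correctly and $\theta$ is the desired embedding of the full weak comparison monoid, with or without tests.

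The step I expect to be the main obstacle, although it is genuinely routine here, is verifying this determinacy claim: that the ``agree'' and ``disagree'' restrictions jointly exhaust the common domain $D(f)D(g)$, and that the domain bound then leaves no remaining freedom. Everything else is a formal transport of identities across the already-established embedding $\theta$, using only that $\theta$ respects the operations appearing in the three laws.
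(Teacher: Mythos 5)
Your proposal is correct and follows essentially the same route as the paper: represent the disagreeable (monoid with tests) reduct via Theorem~\ref{disrep} and Corollary~\ref{neqthm}, then observe that laws \eqref{eq:comp2}, \eqref{eq:comp3} and \eqref{eq:comp1} uniquely determine weak comparison in functional structures, so the same embedding automatically represents it correctly. Your spelled-out determinacy argument (the ``agree'' and ``disagree'' sets partition the common domain $D(f)D(g)$, and the domain bound removes any remaining freedom) is exactly the content the paper leaves implicit.
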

\begin{proof}
Lemma \ref{*rep} and Theorem \ref{disrep} imply that the disagreeable monoid reduct of any weak comparison monoid
is functional; now note that laws (\ref{eq:comp2}), (\ref{eq:comp3}) and (\ref{eq:comp1}) completely specify weak comparison in functional disagreeable semigroups.  The extension to the case with tests is immediate.
\end{proof}

Analogous to Proposition \ref{simpexit}, the quasi-equational axiom for $\neq$, namely (\ref{eq:inimp}), can be expressed
equationally in the presence of weak comparison.

\begin{pro} \label{simpcomp}
Within weak comparison monoids, quasi-equational law (\ref{eq:inimp}) for disagreeable monoids is equivalent to the following equational laws:
\begin{align}
&\bullet \quad (s=t)[u,u]=D(s)D(t)u\quad &\label{eq:wc1}\\*
&\bullet \quad (s=t)[u,v]=(s=t)[(s*t)u,(s\neq t)v] &\label{eq:wc2}
\end{align}
\end{pro}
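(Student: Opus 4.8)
The plan is to follow the template of Proposition~\ref{simpexit}, proving the two implications separately. For the direction from \eqref{eq:wc1} and \eqref{eq:wc2} to \eqref{eq:inimp} I would argue purely equationally, using only the two new laws and the underlying restriction structure; for the converse I would lean on the functional representation already established in Corollary~\ref{corweak}, exactly as the converse half of Proposition~\ref{simpexit} leans on functionality.

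For the forward direction, assume \eqref{eq:wc1} and \eqref{eq:wc2} and suppose $(s*t)\leq\e$ and $(s\neq t)\leq\e$. Since $s*t$, $s\neq t$ and $\e$ are all domain elements and the order on $D(S)$ is $\e\leq\f\Leftrightarrow\e=\e\f$, these hypotheses say exactly $(s*t)\e=(s*t)$ and $(s\neq t)\e=(s\neq t)$. I would then run the chain
\bea
D(s)D(t)\e&=&(s=t)[\e,\e]\\
&=&(s=t)[(s*t)\e,(s\neq t)\e]\\
&=&(s=t)[(s*t),(s\neq t)]\\
&=&(s=t)[1,1]\\
&=&D(s)D(t),
\eea
in which the first and last equalities are instances of \eqref{eq:wc1} (with $u=\e$ and $u=1$), the second and fourth are instances of \eqref{eq:wc2} (with $u=v=\e$, and with $u=v=1$ read right to left, using $(s*t)\cdot 1=(s*t)$ and $(s\neq t)\cdot 1=(s\neq t)$), and the middle equality uses the two hypotheses. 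This yields $D(s)D(t)\e=D(s)D(t)$, that is, $D(s)D(t)\leq\e$, which is precisely \eqref{eq:inimp}. It is worth recording that this half invokes neither \eqref{eq:inimp} itself nor the laws \eqref{eq:comp1}--\eqref{eq:comp3}, so the equivalence is not circular.

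For the converse, once \eqref{eq:inimp} is assumed the structure is a genuine weak comparison monoid, hence functional by Corollary~\ref{corweak}; it therefore suffices to verify \eqref{eq:wc1} and \eqref{eq:wc2} in $({\mathcal P}(X),I(X))$. Both follow at once from the pointwise definition of $(f=g)[h,k]$: law \eqref{eq:wc1} holds because $(f=g)[u,u]$ returns $u(x)$ exactly when both $f$ and $g$ are defined at $x$, so it equals $D(f)D(g)u$; and law \eqref{eq:wc2} holds because pre-composing $u$ by $f*g$ (respectively $v$ by $f\neq g$) changes its value only off the set on which the corresponding branch of the weak comparison is selected, where that branch plays no role.

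I do not expect a serious obstacle here: the genuine content is the equational chain above, and the heaviest tool, the functional representation, is already in place, so the converse reduces to a short semantic check. The only point demanding a little care is confirming that the forward implication is truly independent of \eqref{eq:inimp}, so that the asserted equivalence has content — this is precisely what licenses replacing the quasi-equation \eqref{eq:inimp} by the two equations \eqref{eq:wc1} and \eqref{eq:wc2} in the axiomatisation.
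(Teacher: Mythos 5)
Your proof is correct and follows essentially the same route as the paper: the forward direction is the identical five-step equational chain from $D(s)D(t)\e$ to $D(s)D(t)$ using \eqref{eq:wc1} and \eqref{eq:wc2}, and the converse is dispatched by observing that both laws hold in functional cases (via Corollary~\ref{corweak}). Your extra remarks on which instance of which law justifies each step, and on non-circularity, are sound elaborations of what the paper leaves implicit.
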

\begin{proof} 
If the above two laws hold and $(s*t)\leq \e,(s\neq t)\leq \e$, then
\bea
D(s)D(t)\e&=&(s=t)[\e,\e]\\
&=&(s=t)[(s*t)\e,(s\neq t)\e]\\
&=&(s=t)[(s*t),(s\neq t)]\\
&=&(s=t)[1,1]\\
&=&D(s)D(t),
\eea
so $D(s)D(t)\leq \e$.
Conversely, the above two laws clearly hold in functional cases.
\end{proof}

${\mathcal T}(X)$ is closed under weak comparison, where the operation is called {\em comparison} in \cite{compsemi}, following Kennison \cite{kennison}.
Algebras of transformations under composition and comparison
are axiomatized in \cite{compsemi} in a test-free setting.  An axiomatization for partial functions is also given \cite{compsemi}, but with a different ``non-computable" interpretation of the comparison operation in which agreement includes places
where both functions are undefined, an interpretation suited to obtaining a rich algebra of partial transformations but not relevant for current purposes where the goal is to model possibly non-halting computable functions.  For this reason we have used the phrase ``weak comparison" here, to distinguish it from this previously used definition for partial functions.

\section{Some special cases}

\subsection{$B=D(S)$ and modal restriction semigroups}

The largest $B$ can be is all of $D(S)$.
In that case, the operations considered here reduce
to those axiomatized in \cite{modrest}.  Specifically, the operations $P,\bowtie$ and $\sqcup$ discussed in \cite{modrest} are defined on ${\mathcal P}(X)$ as follows:
\bi
\item $P(s)$ is the restriction of the identity function to the complement of the domain of $s$; that is, $P(s):=D(s)'$;
\item $(s\bowtie t):=(s*t)\cup D(s)'D(t)'$, the restriction of the identity function to those $x\in X$ where $s,t$ do not disagree;
\item $s\sqcup t:=D(s)[s,t]$, which is $s$ where it is defined together with $t$ when $s$ is not defined (and undefined otherwise).
\ei
Conversely we may write:
\bi
\item $s*t:=(s\bowtie t)D(s)D(t)$;
\item $s\neq t:=(s\bowtie t)'$;
\item $(s,\alpha)[t,u]:=D(s\alpha)t\sqcup D(s\alpha')u$;
\item $(s=t)[u,v]:=(s*t)u\sqcup (s\neq t)v$;
\item $\e\cup \f:= \e\vee \f$ for $\e,\f\in D(S)=B$.
\ei

\subsection{The test-free algebra of non-halting programs}

At the other end of the spectrum, the smallest $B$ can be in a restriction monoid with tests is $\{0,1\}$.  This case has some interest in terms of modelling the {\em test-free} algebra of computable functions.   The extended {\em if-then-else} operations are of no interest, but the disagreeable operation and weak comparison still make sense.  Indeed, many of our earlier results were proved at this level of generality, specifically Theorem \ref{disrep} and Corollary \ref{corweak}; neither the disagreeable operation nor weak comparison had previously been axiomatized in any function semigroup setting, as far as we know.

\subsection{Relation to B-semigroups}

In \cite{ITEIJAC}, the class of B-semigroups $(S,B)$ is shown to (finitely) axiomatize the class of transformation semigroups equipped with {\em if-then-else} operations indexed by a Boolean algebra.  These arise as the subalgebras of reducts of extended {\em if-then-else}-monoids consisting of all elements $s$ satisfying $D(s)=1$, in which the test elements are assumed to be part of a distinct sort and closed under the mapping $\alpha\mapsto D(s\alpha)$ for all $s\in S$.

\section{Extensions, enrichments and some open problems}

\subsection{Tentative axioms for extended {\em while-do}}  \label{while}

It would be remiss to say nothing further about looping here, since this is the obvious source of the non-halting of programs currently
being modelled!  Axiomatizing the {\em while-do} command is a very difficult problem, even when only halting tests are considered, and it is unlikely that a finite
axiomatization exists, at least if completeness with respect to functionally valid implications is desired.  However, with relatively little effort, we can
obtain a reasonable first approximation.  In \cite{modrest}, $(\alpha:s)$ is defined and crudely axiomatized for the case in which $B=D(S)$, and we adopt a similar approach here.

We say the extended {\em if-then-else} monoid $(S,B)$ is a {\em W-monoid} if it is equipped with a set of mixed ternary operations
$S\times B\times S\rightarrow S$ obeying the following law:
\begin{align}
&\bullet\quad ((t,\alpha):s)=(t,\alpha)[s((t,\alpha):s),1].\quad &\label{eq:W12}
\end{align}
So $({\mathcal P}(X),I(X))$ is a W-monoid if we define $((f,\alpha):g)$ to be extended {\em while-do} as discussed earlier. Another way to state (\ref{eq:W12}) is as the two separate laws:
\begin{align}
&\bullet\quad D(t\alpha)((t,\alpha):s)=D(t\alpha)s((t,\alpha):s),\quad &\label{eq:W1}\\*
&\bullet\quad D(t\alpha')((t,\alpha):s)=D(t\alpha').\quad &\label{eq:W2}
\end{align}

\begin{lem}  \label{powers}
In the restriction semigroup $S$, if $\e,\f\in D(S)$ and $s,t\in S$ satisfy
$\e st=\e t$ and $\f t=\f$, then $(\e s)^n\f\leq t$ for all $n\geq 0$.
\end{lem}
\begin{proof} We use induction on $n$.  Now $\f t=\f$ implies that $\f\leq t$, giving the $n=0$ case.  Assuming the $n=k$ case, we have
$$(\e s)^{k+1}\f=(\e s)(\e s)^k\f\leq \e st=\e t\leq t $$
by the stability property, and the result follows.
\end{proof}

Lemma \ref{powers} shows that any element $t$ in a W-monoid satisfying both $D(t\alpha) su=D(t\alpha)u$ and
$D(t\alpha')u=D(t\alpha')$ will necessarily be at least as big as each element
$(D(t\alpha) s)^nD(t\alpha')$ for $n$ a natural number; in particular, this is
true of $((t,\alpha):s)$.  With the help of some additional laws, we can do better.

We say the W-monoid $S$ is {\em Kleenean} if for all $s,t,u,\alpha$:
\bi
\item $((t,\alpha):s)D(t\alpha')=((t,\alpha):s)$ and
\item $D(t\alpha)su\leq u\ \Rightarrow \ ((t,\alpha):s)u\leq u$.
\ei
The second rule above is analogous to a rule for Kleene algebras (possibly without tests), namely $su\leq u\ \Rightarrow\ s^*u\leq u$.

The structure $({\mathcal P}(X),I(X))$ is Kleenean, and $((t,\alpha):s)$ is the
smallest element at least as big as $(D(t\alpha) s)^nD(t\alpha')$ for all $n\geq 0$ (indeed it is their disjoint union), hence must also be the smallest element $u$ amongst those satisfying $D(t\alpha) su=D(t\alpha) u$ and $D(t\alpha')u=D(t\alpha')$.  In general we have the following.

\begin{pro}  \label{minb}
Let $S$ be a functional Kleenean restriction W-monoid with tests.  Then for all $s,t,\alpha$,
$((t,\alpha):s)$ is the smallest $u$ for which $D(t\alpha) su=D(t\alpha) s$ and $D(t\alpha')u=D(t\alpha')$.
\end{pro}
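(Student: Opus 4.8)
The plan is to set $w := ((t,\alpha):s)$ and to check the two halves of ``smallest solution'' separately: that $w$ itself satisfies the two required conditions, and that $w$ lies below every $u$ that does. The first half needs only the defining law of a W-monoid in its split form. Laws \eqref{eq:W1} and \eqref{eq:W2} read exactly $D(t\alpha)sw = D(t\alpha)w$ and $D(t\alpha')w = D(t\alpha')$, which are precisely the two conditions with $u=w$ (I read the first condition, following the discussion after Lemma \ref{powers}, as $D(t\alpha)su = D(t\alpha)u$). So $w$ is a solution with no work beyond unpacking \eqref{eq:W12}.

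For minimality, suppose $u$ satisfies $D(t\alpha)su = D(t\alpha)u$ and $D(t\alpha')u = D(t\alpha')$. First I would record the elementary restriction-semigroup fact that $D(t\alpha)u \le u$ in the natural order: since $D(D(t\alpha)u) = D(t\alpha)D(u)$ and $D(t\alpha)D(u)u = D(t\alpha)u$, we have $D(t\alpha)u = D(D(t\alpha)u)u$, i.e. $D(t\alpha)u \le u$. Hence $D(t\alpha)su = D(t\alpha)u \le u$, and the second Kleenean axiom, applied with this $u$, yields $wu \le u$. This is the step that carries the real content of the loop, being the analogue of the Kleene rule $su \le u \Rightarrow s^*u \le u$.

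The crux is then to upgrade $wu \le u$ to $w \le u$. Here I would use the first Kleenean axiom $w = wD(t\alpha')$ together with the hypothesis $D(t\alpha')u = D(t\alpha')$ to compute
\[
wu = (wD(t\alpha'))u = w(D(t\alpha')u) = wD(t\alpha') = w ,
\]
so that $wu$ collapses back to $w$. Combining this with $wu \le u$ gives $w = wu \le u$, which is exactly the minimality claim; antisymmetry of the natural order then makes $w$ the unique smallest solution. I expect this collapse $wu = w$ to be the one genuinely clever move, since everything else is bookkeeping with the W-monoid and restriction identities.

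Two remarks I would add. The argument never uses the functional hypothesis directly: it runs in any Kleenean W-monoid with tests from the two split loop laws, the two Kleenean axioms, and basic restriction identities, so functionality serves only to guarantee that these axioms hold and that $((t,\alpha):s)$ is the genuine extended \emph{while-do}. Finally, Lemma \ref{powers} is \emph{not} needed for this proof; it supplies the complementary lower-bound picture that every solution dominates each approximant $(D(t\alpha)s)^n D(t\alpha')$, which in the functional model exhibits $w$ as their disjoint union, but the Kleenean axioms make that explicit join unnecessary.
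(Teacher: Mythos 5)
Your proof is correct and follows essentially the same route as the paper's: verify that $w=((t,\alpha):s)$ is a solution via \eqref{eq:W1}--\eqref{eq:W2}, use the second Kleenean axiom to get $wu\le u$, and then collapse $wu$ to $w$ via $w=wD(t\alpha')=wD(t\alpha')u=wu$. Your reading of the first condition as $D(t\alpha)su=D(t\alpha)u$ (rather than the $=D(t\alpha)s$ printed in the statement) matches what the paper's own proof actually uses, and your explicit justification of $D(t\alpha)u\le u$ fills in a step the paper leaves implicit.
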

\begin{proof} If $u$ is one such, then $D(t\alpha) su\leq u$ and so $((t,\alpha):s)u\leq u$, so that
$$((t,\alpha):s)=((t,\alpha):s)D(t\alpha')=((t,\alpha):s)D(t\alpha')u=((t,\alpha):s)u\leq u.$$
Conversely, $((t,\alpha):s)$ is one such $u$, as we have seen.
\end{proof}

Proposition \ref{minb} shows that in any representation $\psi$ of a functional Kleenean restriction W-monoid with tests, the representation of $((t,\alpha):s)$ is correct relative to the image of $\psi$.
Something analogous happens with the Kleene closure of an element in a Kleene algebra (with or without tests): $r^*$ is the least ``reflexive transitive element" in the algebra containing $r$, and any representation in terms of relations
will represent it as the least reflexive transitive relation containing $r$ amongst those relations in the algebra.  

Recall that a semigroup is \emph{periodic} if for every element $x$, there are positive integers $i$ and $p$ such that $x^i=x^{i+p}$.  The following result is essentially a corollary of Lemma \ref{powers} and Proposition \ref{minb}.  
\begin{thm}
Let $S$ be a functional restriction monoid with tests also carrying extended if-then-else operation satisfying laws \eqref{eq:EITE2}--\eqref{eq:EITE5} and extended while-do operations satisfying the Kleenean W-monoid axioms.  If $S$ is periodic, then any functional representation as a monoid with tests, correctly represents both extended if-then-else and extended while-do.
\end{thm}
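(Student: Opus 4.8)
The plan is to handle the two operations separately, reducing each to a characterisation already in hand: extended if-then-else to the uniqueness discussion preceding Notation~\ref{notation}, and extended while-do to the minimality clause of Proposition~\ref{minb}, with periodicity entering only in the latter. Fix a functional representation $\psi\colon S\to\Pa(X)$; as a representation of the underlying restriction monoid with tests it preserves composition, $D$ and the tests (with complement). For extended if-then-else there is then nothing new to do: applying $\psi$ to \eqref{eq:EITE2}, \eqref{eq:EITE3} and \eqref{eq:EITE5} shows that $\psi((s,\alpha)[t,u])$ is a function satisfying the three constraints that, as noted before Notation~\ref{notation}, single out the genuine extended if-then-else of $\psi(s),\psi(\alpha),\psi(t),\psi(u)$ in $\Pa(X)$. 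Hence $\psi$ correctly represents it; this is exactly the argument of the corollary to Theorem~\ref{main}.

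Now fix $s,t,\alpha$ and write $E=\psi(D(t\alpha)s)$, $F=\psi(D(t\alpha'))$ and $G=\psi(D(t\alpha))$, computed in $\Pa(X)$. The genuine extended while-do of the images is $W=((\psi t,\psi\alpha):\psi s)=\bigsqcup_{n\ge 0}E^nF$, a union over the pairwise domain-disjoint pieces indexed by loop exit time. Put $w=((t,\alpha):s)$. Since $w$ satisfies $D(t\alpha)sw=D(t\alpha)w$ and $D(t\alpha')w=D(t\alpha')$ in $S$ (it is ``one such'' in Proposition~\ref{minb}), applying $\psi$ gives $E\psi(w)=G\psi(w)$ and $F\psi(w)=F$, so Lemma~\ref{powers}, used in $\Pa(X)$, yields $E^nF\le\psi(w)$ for every $n$ and hence the lower bound $W\le\psi(w)$.

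For the reverse inequality I would exploit periodicity to collapse the infinite loop to a bounded one. Because $S$ is periodic there are $i\ge 1$, $p\ge 1$ with $(D(t\alpha)s)^i=(D(t\alpha)s)^{i+p}$ in $S$, so $E^i=E^{i+p}$ in $\Pa(X)$; as the pieces $E^kF$ and $E^{k+p}F$ then have disjoint domains yet are equal, they must vanish, giving $E^kF=0$ for all $k\ge i$ and therefore $W=\bigsqcup_{k=0}^{i-1}E^kF$, a finite union. This bounded loop lies in the image of every representation: define inside $S$ the unfoldings $z_0:=(t,\alpha)[0,1]$ and $z_n:=(t,\alpha)[sz_{n-1},1]$, so that correctness of if-then-else gives $\psi(z_{i-1})=\bigsqcup_{k=0}^{i-1}E^kF=W$. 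Working in a faithful functional model of $S$ (which exists since $S$ is functional) one checks that $z_{i-1}$ itself satisfies $D(t\alpha)sz_{i-1}=D(t\alpha)z_{i-1}$ and $D(t\alpha')z_{i-1}=D(t\alpha')$, and transports these equations back to $S$ by injectivity; Proposition~\ref{minb} then forces $w\le z_{i-1}$, whence $\psi(w)\le\psi(z_{i-1})=W$. Together with the lower bound this gives $\psi(w)=W$, as required.

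The one genuinely new point, and the main obstacle, is this upper bound, where periodicity is indispensable. Without it $\psi(w)$ may strictly exceed $W$: it can be defined on points whose $s$-orbit cycles forever inside the truth set of $\alpha$ and never exits, exactly the points on which the genuine while-do is undefined, and the defining identity $w=(t,\alpha)[sw,1]$ alone cannot exclude them (unfolding it only produces tautologies at such points). Periodicity is precisely what forces every surviving orbit to become cyclic within $i$ steps, so that a point not exiting by step $i$ never exits; this is what lets the bounded unfolding $z_{i-1}$ capture $W$ exactly and brings the minimality clause of Proposition~\ref{minb} to bear. The residual work, verifying that $z_{i-1}$ meets the two hypotheses of that proposition, is routine once carried out in a faithful functional model.
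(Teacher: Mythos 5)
Your proof is correct and follows essentially the same route as the paper's: extended if-then-else is correctly represented because laws \eqref{eq:EITE2}--\eqref{eq:EITE5} define it from the faithfully represented composition, domain and tests, and periodicity collapses extended while-do to a finite nested if-then-else whose image is then pinned down using Lemma~\ref{powers} for one inequality and Proposition~\ref{minb} for the other. The only differences are cosmetic: you unroll the loop from the bottom via $z_n=(t,\alpha)[sz_{n-1},1]$ where the paper nests from the top, and you observe explicitly that the tail pieces $E^kF$ for $k\ge i$ vanish (by disjointness of domains) rather than merely that the union stabilises.
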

\begin{proof}
Assume $S$ has been represented over some set $X$ by a restriction monoid with tests representation $\theta$.  Correct representability of extended if-then-else is observed in Corollary \ref{ite} (it follows because of the observation that properties \eqref{eq:EITE2}--\eqref{eq:EITE5} define extended if-then-else in terms of composition, tests and domain).  Correct representability of extended while-do will follow because in the periodic case, an extended while-do can be written as a finite number of nested extended if-then-else statements, and such an element is correctly represented.

To make this intuitive idea rigorous, observe that for any $t,\alpha,s$ the correct functional representation of $((t^\theta,\alpha^\theta):s^\theta)$ is $\bigcup_{i\in\omega}\big((D(t\alpha) s)^iD(t\alpha')\big)^\theta$ but with the assumption of periodicity, this infinite union coincides with the finite union $\bigcup_{i\leq n}\big((D(t\alpha) s)^iD(t\alpha')\big)^\theta$ for some $n$.  We will give an explicit description of a nested series of extended if-then-else statements $v$ in $S$, which will be represented as the function $\bigcup_{i\leq n}\big((D(t\alpha) s)^iD(t\alpha')\big)^\theta$.  This will be sufficient to show that $v=((t,\alpha):s)$ (and therefore that $((t,\alpha):s)^\theta=((t^\theta,\alpha^\theta):s^\theta)$ as required), because $v$ satisfies $(D(t\alpha) s)^\theta v^\theta=(D(t\alpha) s)^\theta$ and $(D(t\alpha'))^\theta v^\theta=(D(t\alpha'))^\theta$ so has $v\geq ((t,\alpha):s)$ by Proposition~\ref{minb} (and the fact that $\theta$ is a faithful restriction monoid with tests representation).  But also $v\leq ((t,\alpha):s)$ because $\big((D(t\alpha) s)^iD(t\alpha')\big)^\theta\subseteq ((t,\alpha):s)^\theta$ for every $i$ by Lemma~\ref{powers}.

We now inductively define the nested if-then-else statement $v$.  Let $v_0$ denote the element $(D(t\alpha)s)^{n}D(t\alpha')$.  
Now assume that we have defined $v_k$ for some $0\leq k\leq n-1$ and that
\begin{equation}
v_k^\theta=\bigcup_{n-k\leq i\leq n}((D(t\alpha)s)^{i}D(t\alpha'))^\theta.\label{eq:unionversion}
\end{equation}
Define 
\[
v_{k+1}:=((D(t\alpha)s)^{n-(k+1)}t,\alpha)[v_k,(D(t\alpha)s)^{n-(k+1)}D(t\alpha')]
\]
The definition of extended if-then-else shows (for $k>0$) that
$v_{k+1}^\theta$ is equal to the union of the representation of $D\big((D(t\alpha)s)^{n-(k+1)}t\alpha\big)v_{k}$ with the representation of 
\begin{equation}
D\big((D(t\alpha)s)^{n-(k+1)}t\alpha'\big)\big(D(t\alpha)s\big)^{n-(k+1)}D(t\alpha').\label{eg:alongtheway}
\end{equation}
As $D\big((D(t\alpha)s)^{n-(k+1)}t\alpha'\big)=D\big((D(t\alpha)s)^{n-(k+1)}D(t\alpha')\big)$, the expression \eqref{eg:alongtheway} simply reduces to $(D(t\alpha)s)^{n-(k+1)}D(t\alpha')$.
Similarly, 
\[
D((D(t\alpha)s)^{n-(k+1)}t\alpha)=D((D(t\alpha)s)^{n-(k+1)}D(t\alpha)),
\]
so that the induction hypothesis \eqref{eq:unionversion} gives $D((D(t\alpha)s)^{n-(k+1)}t\alpha)v_{k}=v_k$.  Thus $v_{k+1}^\theta$ is the union of $v_k^\theta$ with $((D(t\alpha)s)^{n-(k+1)}D(t\alpha'))^\theta$, showing that the induction hypothesis is preserved.  In particular this shows that $v_n$ is the desired union $\bigcup_{i\leq n}\big((D(t\alpha) s)^iD(t\alpha')\big)^\theta$.
 \end{proof}

If $S$ is finite, then it is periodic, and moreover the representation method used above represents $S$ as functions on a finite set.  So a corollary to the above is that for finite $S$, the Kleenean restriction W-monoid with if-then-else axioms are sound and complete for functional models on finite sets.

There is also the possibility of defining extended {\em while-do} operations in terms of the equality (partial) predicate: one could define one or both of $((f=g):h)$ and $((f\neq g):h)$, and easy analogs of the above definitions and results may be obtained.  
\begin{problem}
Is there a finite axiomatisation that is complete for equational properties of restriction semigroups of functions equipped with extended {\em while-do} and extended \emph{if-then-else}?  Is there a finite complete axiomatization for the quasiequational theory?  Is there even a recursively enumerable and complete axiomatization\footnote{Some authors prefer to include ``recursively enumerable'' as part of the definition of ``axiomatisation'', in which case we are asking whether or not there is a complete axiomatisation.} for the quasiequational theory?
\end{problem}

\subsection{Non-halting tests}

Non-halting tests were considered by Manes in \cite{manes}, where {\em if-then-else} algebras over Boolean algebras,
C-algebras and ADAs were considered, in the absence of composition.  Here, C-algebras and ADAs are algebras of non-halting conditions, generalising Boolean algebras.  In the current setting, our tests are assumed to form a Boolean algebra, although it turns out that we can construct an algebra of ``non-halting tests" from these Boolean tests together with some of our operations.

First note that the structure of the Boolean algebra $B$ is faithfully captured by its induced {\em if-then-else}  action:
$\alpha[x,y]=\beta[x,y]$ for all $x,y\in S$ if and only if $\alpha=\beta$ (as follows on setting $x=1,y=0$ and then $x=0,y=1$).  However, this reflects the fact that we choose to distinguish elements of $B$ based only on their effect in {\em if-then-else} statements.

Similarly, letting $P[x,y],Q[x,y]$ be induced binary operations of the form $(a=b)[x,y]$ or $(a,\alpha)[x,y]$, or indeed $\alpha[x,y]$
(letting $a=1$ in the previous case), we can recursively generate new operators by setting
\begin{align}
&\bullet\quad (P\wedge Q)[x,y]:=P[Q[x,y],y]\quad &\label{NH1}\\*
&\bullet \quad (P\vee Q)[x,y]:=P[x,Q[x,y]] &\label{NH2}\\
&\bullet\quad (\lnot P)[x,y]:=P[y,x]&\label{NH3}
\end{align}
for all $x,y\in S$.  Identifying each such $P$ with its functional effect by setting $P=Q$ if and only if $P[x,y]=Q[x,y]$
for all $x,y\in S$, we see that the above recursive scheme generates an algebra of ``generalised predicates" $B^*$ under
$\wedge,\vee,\lnot$, in which $B$ is embedded as a subalgebra.

It is easily checked that these induced logical operations on $B^*$ have the following interpretations in functional cases:
\bi
\item $P\wedge Q$ is {\em true} if both $P,Q$ are; {\em false} if $P$ is {\em false}, or if $P$ is defined and $Q$ is
{\em false}; and undefined otherwise.
\item $P\vee Q$ is {\em true} if $P$ is {\em true}, or if $P$ is defined and $Q$ is {\em true}; {\em false} if both $P,Q$
are {\em false}; and undefined otherwise.
\item $\lnot P$ is {\em true} if and only if $P$ is {\em false}, and vice versa, and is undefined if $P$ is.
\ei
In \cite{manes}, Manes considers exactly these connectives on ``non-halting" conditions, and justifies them in terms of the way actual programming languages work.  It follows that $(B^*,\wedge,\vee,\lnot,0,1)$ is a {\em C-algebra} in the sense of \cite{manes}, where ITE-algebras over C-algebras are studied. 

In \cite{manes}, the C-algebras of non-halting tests were assumed to have some prior independent existence (satisfying various laws generalising Boolean algebra), and indeed
the above three connective definitions (\ref{NH1}) to (\ref{NH3}) were assumed to hold as laws for the ITE-algebras considered in \cite{manes}.
In the current setting too, an alternative approach would be to begin with an abstract collection of non-halting conditions as in \cite{manes}, rather than deriving them from a given Boolean algebra of ``elementary" halting conditions via extended {\em if-then-else}.  
\begin{problem}
Characterise the algebras of computable functions associated with an abstract C-algebra of non-halting tests.
\end{problem}

\subsection{Complexity and decidability}
Hardin and Kozen \cite{harkoz} showed that the implicational theory of relational models of KAT is $\Pi_1^1$-complete: so no complete recursive axiomatisation is possible.  However, the equational theory is well known to be only PSPACE.  Goldblatt and the first author \cite{goljac} showed that deciding functional validity for propositions in strict fragments of deterministic PDL is $\Pi_1^1$-hard, which can be translated to the algebraic approach taken here in the form of the $\Pi_1^1$-hardness of the equational theory of functionally representable algebras in the language containing composition, antidomain, intersection and {\em while}.  The argument in \cite{goljac} makes intrinsic use of the ability to nest halting statements as test conditions for other halting statements (which is enabled by antidomain, or equivalently, the modal necessity operator of PDL). Such nesting is impossible in the signatures considered in the present article.
\begin{problem}
What is the complexity of the equational theory of representable algebras in the various signatures considered here, when {\em while} is included.  In particular, is it possible that the equational theory is decidable for the class of functionally representable  Kleenean restriction W-monoids with tests?    Under what constructions does the implicational theory of while achieve high undecidability \up(such as $\Pi_1^1$-hardness\up{)?}
\end{problem}

\end{document}